\documentclass[11pt,reqno]{amsart}
\usepackage{a4wide}
\usepackage{hyperref,amsmath,amsfonts,mathscinet,amssymb,amsthm,natbib}
\usepackage{accents}
\usepackage{paralist}
\usepackage{constants}
\usepackage{xcolor}
\usepackage{tikz-cd}
\usepackage{todonotes}
\usepackage[mathscr]{euscript}
\usepackage{enumitem}

\usepackage{mathtools}

\theoremstyle{plain}
\newtheorem{thm}{Theorem}[section]
\newtheorem{lemma}[thm]{Lemma}
\newtheorem{cor}[thm]{Corollary}
\newtheorem{prop}[thm]{Proposition}

\theoremstyle{definition}
\newtheorem{rem}[thm]{Remark}
\newtheorem{cnv}[thm]{Convention}
\newtheorem{example}[thm]{Example}
\newtheorem{defn}[thm]{Definition}

\expandafter\let\expandafter\oldproof\csname\string\proof\endcsname
\let\oldendproof\endproof
\renewenvironment{proof}[1][\proofname]{%
  \oldproof[\bf #1]%
}{\oldendproof}

\newcommand{\abs}[1]{\left|{#1}\right|}
\newcommand{\norm}[1]{\left\lVert{#1}\right\rVert}

\newcommand{\R}{\mathbb{R}}

\newcommand{\M}{\mathcal{M}}
\newcommand{\K}{\mathcal{K}}

\DeclareMathOperator*{\esssup}{ess\,sup}
\DeclareMathOperator*{\essinf}{ess\,inf}
\DeclareMathOperator*{\loc}{loc}
\DeclareMathOperator*{\sgn}{sgn}

\begin{document}

\title{On optimal endpoints for integral kernel operators}

\author{Ivan Kotal\'ik}

\email[I.~Kotal\'ik]{ivan.kotalik690@student.cuni.cz}
\urladdr{0009-0009-0128-2111}

\address{
 Department of Mathematical Analysis,
	Faculty of Mathematics and Physics,
	Charles University,
	Sokolovsk\'a~83,
	186~75 Praha~8,
	Czech Republic}

\subjclass[2020]{46E30, 47B34, 47B38, 46B70}
\keywords{nonincreasing rearrangement, integral operator, function spaces, rearrangement-invariant quasinorm, optimal endpoint}

\begin{abstract}
We study the behavior of integral kernel operators acting on functions of a single real variable. 
In particular, we attempt to find possible candidates for their optimal endpoint spaces in the class of Banach function spaces endowed with a rearrangement-invariant quasinorm. 
To achieve this, we characterize the existence of a Calder\'on-type estimate for these operators. We also build a theory of 
optimal endpoint spaces for the Calderón-type operator given by this pointwise estimate. We use these results to propose optimal endpoint spaces for some integral kernel operators.  
Furthermore, we show that these endpoints are, in fact, optimal under certain conditions.
\end{abstract}

\date{\today}

\maketitle

\bibliographystyle{abbrv}

\section{Introduction}

Integral operators play an important role in mathematical analysis and its applications. A pivotal example of such an operator is the well-known 
Laplace transform, defined for any appropriate $f$ on $(0,\infty)$ by
\begin{equation*}
    \mathcal{L} f(t) := \int_0^\infty f(s) e^{-st} \, ds.
\end{equation*}
Optimal endpoints for $\mathcal{L}$ and an upper estimate for the nonincreasing rearrangement of $\mathcal{L}$, namely
\begin{equation*}
    (\mathcal{L}f)^*(t)\leq \int_0^\frac{1}{t} f^*, \ t\in (0,\infty),
\end{equation*}
for every $f\in L^1 + L^\infty$, have already been 
found in~\cite{buriankova2017optimal}. Another important example of such an operator is the fractional maximal operator $M_\gamma$, where $\gamma\in [0,n)$,
defined for $f\in L^1_{\loc}(\R^n)$ by
\begin{equation*}
    (M_\gamma f)(x) := \sup_{Q\ni x} \abs{Q}^{\gamma /n-1} \int_Q \abs{f}, \ x\in \R^n,
\end{equation*}
where the supremum is extended over all cubes $Q\subset \R^n$ with sides parallel to the coordinate axes, and $\abs{Q}$ denotes 
the $n$-dimensional Lebesgue measure. An upper estimate for the nonincreasing rearrangement of $M_\gamma$, namely 
\begin{equation*}
    (M_\gamma f)^*(t)\leq C \sup_{t<\tau<\infty} \tau^{\gamma /n-1} \int_0^\tau f^*, \ t\in (0,\infty),
\end{equation*}
for every $f\in L^1_{\loc}(\R^n)$ and a constant $C \in (0,\infty)$ depending only on $\gamma$ and $n$, has been shown in~\cite{MR1758860}. This type of estimate is called a Calder\'on-type estimate. More precisely, let $T$ be an operator acting on $\M(R,\mu)$ for some general measure space and $S$ be an operator acting on $\M((0,\infty),\lambda)$. Let there exist a $C\in (0,\infty)$ such that it holds, for all appropriate $f$,
\begin{equation*}
    (Tf)^*(t)\leq C S (f^*)(t), \ t\in(0,\infty).
\end{equation*}
This inequality is called a Calder\'on-type estimate for $T$ and $S$ is called a Calder\'on-type operator. 
Further examples of these estimates could be found for the Hardy-Littlewood maximal operator and the Hilbert transform in~\cite[Chapter 3, Theorem 3.8]{bs88} and \cite[Chapter 3, Theorem~4.8]{bs88}, respectively. \par
 Regarding the usefulness of Calder\'on-type estimates, consider the Riesz potential, defined as 
\begin{equation*}
    I_\alpha f(x) = \int_{\R^n} \frac{f(y)}{\abs{x-y}^{n-\alpha}} \, dy, \ \alpha\in (0,n) , \ x\in \R^n,
\end{equation*}
for any appropriate $f$, and consider the Poisson equation 
\begin{equation*}
    -\Delta u = g,
\end{equation*}
for a given function $g$. It is classical that, for a sufficiently smooth $g$, a solution to this equation can be written as $u=C I_2 g$ for $n\geq 3$, 
where $C\in (0,\infty)$ depends only on $n$ (see~\cite[Chapter 2, Theorem 1]{MR1625845}). We may use the Calder\'on-type estimate for $I$, which is easily derived from~\cite[Theorem 1.7]{MR146673}, 
to transfer regularity from $g$ to $u$. \par
The main focus of this paper is to greatly extend and generalize the results from~\cite{buriankova2017optimal}. We are interested in studying integral kernel operators of the form
\begin{equation*}
    Kf(t):=\int_{0}^{\infty} f(s) k(s,t) \, ds, \ t\in (0,\infty),
\end{equation*}
where $k$ is a nonnegative function that is nonincreasing in the second variable. In Section~\ref{prelims}, we establish notation and equip the reader with the necessary knowledge to continue reading. In Section~\ref{calderon}, we 
present a Calderón-type estimate for these integral operators, and 
we also characterize all the kernels $k$ for which this estimate exists. In Section~\ref{variable_integral}, this motivates us to study the behavior of integral operators of the form 
\begin{equation*}
    T_bf(t):=\int_0^{b(t)} f^*,
\end{equation*}
where $b$ is a nonnegative function, on (quasinormed) rearrangement-invariant spaces. Lastly, in Section~\ref{examples}, 
we attempt to apply the established theory to some specific
integral kernel operators. In particular, we attempt to find optimal endpoints for these operators. Note that the question of optimal domains for integral kernel operators has already been studied using different methods, for example, in~\cite{Curbera} and~\cite{Ricker}.

\section{Preliminaries}\label{prelims}
We shall recall some basic properties of function spaces endowed with rearrangement-invariant quasinorms and notions we need from interpolation 
theory. More in-depth resources on these topics include \cite{bs88}, \cite{MR482275}, \cite{pick2012function}, \cite{Nekvinda_2024}, and \cite{propertiesrearrangement}. \par
We define
\begin{equation*}
    \mathcal{M}(0,\infty):=\{f\colon (0,\infty)\rightarrow [-\infty, \infty] : f \textrm{ is Lebesgue measurable on } (0,\infty)\},
\end{equation*}
\begin{equation*}
    \mathcal{M}_{+}(0,\infty):=\{f\in \mathcal{M}(0,\infty) : f \in [0,\infty]\},
\end{equation*}
and
\begin{equation*}
    \mathcal{M}_0(0,\infty):=\{f\in \mathcal{M}(0,\infty) : f \textrm{ is finite $\lambda$-a.e.}\}.
\end{equation*}
We denote the Lebesgue measure of any Lebesgue measurable set $M\subset \R$ as $\abs{M}$ or $\lambda(M)$. \par
The \textsl{distribution function $\mu_f$} of 
$f\in\mathcal{M}(0,\infty)$ is defined as
\begin{equation*}
    \mu_f(t):=\lambda(\{x\in (0,\infty): \abs{f(x)}>t\}), \ t\in [0,\infty),
\end{equation*}
and the \textsl{nonincreasing rearrangement} of $f\in\mathcal{M}(0,\infty)$ is the function $f^*$ defined as 
\begin{equation*}
    f^*(x):=\inf\{t \in [0,\infty): \mu_f(t) \leq x\}, \ x\in [0,\infty).
\end{equation*}
The operation $f\mapsto f^*$ is monotone, that is, $\abs{f}\leq\abs{g}$ a.e.\ implies $f^*\leq g^*$, for each $f$, $g\in\mathcal{M}(0,\infty)$. 
Unfortunately, it is not subadditive. However, the following two inequalities hold
\begin{equation}\label{fake_subaditivity}
    \int_0^t (f+g)^*(s)\, ds \leq \int_0^t f^*(s) \, ds + \int_0^t g^*(s)\, ds,
\end{equation}
\begin{equation}\label{rear_ineq}
    (f+g)^*(t_1 + t_2) \leq f^*(t_1) + g^*(t_2),
\end{equation}
for every $t$, $t_1$, $t_2\in [0,\infty)$ and $f$, $g\in \mathcal{M}_0(0,\infty)$. Another essential property of this operation is the \textsl{Hardy-Littlewood inequality}. 
It states that if $f$, $g\in\mathcal{M}(0,\infty)$, then
\begin{equation*}
    \int_0^\infty \abs{f(t)g(t)} \, dt \leq \int_0^\infty f^*(t)g^*(t) \, dt, \ t\in (0,\infty).
\end{equation*} 
Here and in the rest of this paper, we adopt the convention that $0\cdot\infty = 0$.
We also note that 
\begin{equation}\label{rear_l_infty}
    f^*(0)=\esssup_{t\in (0,\infty)} \abs{f(t)}, \ f\in \M(0,\infty).
\end{equation} 
For $f\in \M(0,\infty)$, we also define the \textsl{maximal function} of $f^*$ as 
\begin{equation*}
    f^{**}(t):=\frac{1}{t}\int_0^t f^*, \ t\in (0,\infty).
\end{equation*}
An intrinsic property of the maximal function is that, for any $f\in \M(0,\infty)$, we have $f^*\leq f^{**}$.\par
A functional $\rho\colon \mathcal{M}_+(0,\infty)\rightarrow [0,\infty]$ satisfying:
\begin{enumerate}[label=(P\arabic*), align=left]
    \item $\rho(f)=0$ if and only if $f\equiv0$; $\rho(\lambda f)=\lambda \rho(f)$; $\rho(f+g)\leq \rho(f)+\rho(g)$;
    \item $f\leq g$ a.e.\ implies $\rho(f)\leq \rho(g)$;
    \item $f_j \nearrow f$ a.e.\ implies $\rho(f_j)\nearrow \rho(f)$;
    \item $\rho(\chi_G)<\infty$ for every $G\subset (0,\infty)$ of finite measure;
    \item $\int_G f(t) \, dt \leq C_G\rho(f)$ for some $C_G$ and every $G\subset(0,\infty)$ of finite measure;
    \item $\rho(f)=\rho(g)$ whenever $f^*=g^*$,
\end{enumerate}
for all $f$, $g$, and $(f_j)_{j=1}^\infty$ in $\mathcal{M}_+(0,\infty)$, and every $\lambda\in [0,\infty)$, is called a \textsl{rearrangement-invariant norm}. 
Let $\rho$ be a rearrangement-invariant norm. The collection $X:=\{f\in\mathcal{M}(0,\infty): \rho(\abs{f})<\infty\}$ is called a \textsl{rearrangement-invariant space}. 
We also define a more general \textsl{rearrangement-invariant quasinorm} as a functional $\rho\colon \mathcal{M}_+(0,\infty)\rightarrow [0,\infty]$ satisfying (P2), (P3), (P4), (P6), and 
\begin{enumerate}[label=(Q\arabic*), align=left]
    \item $\rho(f)=0 \textrm{ if and only if } f\equiv0\textrm{; } \rho(\lambda f)=\lambda \rho(f)\textrm{; } \rho(f+g)\leq C(\rho(f)+\rho(g))$,
\end{enumerate}
for every $f\in \mathcal{M}_+(0,\infty)$, $\lambda\in [0,\infty)$, and a constant $C\in[1,\infty)$. The smallest such $C$ is called the \textsl{modulus of concavity}.
Analogously, we define a \textsl{quasinormed rearrangement-invariant space}. We note that all rearrangement-invariant norms are also 
rearrangement-invariant quasinorms and all rearrangement-invariant spaces are also quasinormed rearrangement-invariant spaces. From (Q1) and (P2)-(P4), it can be easily observed that every $f\in\M_+(0,\infty)$ 
with $\rho(f)<\infty$, where $\rho$ is a rearrangement-invariant quasinorm, is finite a.e.
For each $f\in \mathcal{M}(0,\infty)$, we define 
\begin{equation*}
    \norm{f}_X:=\rho(\abs{f}),
\end{equation*}
where $\rho$ is the corresponding rearrangement-invariant quasinorm of $X$. We say that two rearrangement-invariant quasinorms $\rho_1$, $\rho_2$ are \textsl{equivalent}, denoted by 
$\rho_1\approx\rho_2$, if there exist constants $c_1$, $c_2\in (0,\infty)$ such that
\begin{equation*}
    c_1\rho_1(f)\leq\rho_2(f)\leq c_2\rho_1(f),
\end{equation*}
for every $f\in \mathcal{M}_+(0,\infty)$. We say that two quasinormed rearrangement-invariant spaces $X$, $Y$ are \textsl{equivalent}, denoted by $X\approx Y$, if their quasinorms are equivalent.
For a bounded linear operator $T$ between quasinormed rearrangement-invariant spaces X and Y, denoted by $T\colon X\to Y$, we define the \textsl{operator norm} of $T$ as
\begin{equation*}
    \norm{T}_{X\rightarrow Y}:=\sup\left\{\norm{Tf}_Y : f \in X \textrm{, } \norm{f}_X\leq 1\right\}.
\end{equation*} 
For better readability, we will use this notation for suprema
\begin{equation*}
    \sup\left\{\norm{Tf}_Y : f \in X \textrm{, } \norm{f}_X\leq 1\right\} = \sup_{\norm{f}_X\leq 1} \norm{Tf}_Y.
\end{equation*} \par
For instance, the well-known $L^p$ spaces with $p\in (0,\infty]$ are quasinormed rearrangement-invariant spaces. Another example of quasinormed rearrangement-invariant spaces are the \textsl{classical Lorentz spaces}. 
Let $p\in (0,\infty)$, and let $w\in\mathcal{M}_+(0,\infty)$. The space $\Lambda^p(w)$ is a space defined by the functional 
\begin{equation*}
    \norm{f}_{\Lambda^p(w)}:=\left(\int_0^\infty (f^*(t))^p w(t) \, dt\right)^\frac{1}{p}, \ f\in \M(0,\infty),
\end{equation*}
and the space $\Gamma^p(w)$ is a space defined by the functional 
\begin{equation*}
    \norm{f}_{\Gamma^p(w)}:=\left(\int_0^\infty (f^{**}(t))^p w(t) \, dt\right)^\frac{1}{p}, \ f\in \M(0,\infty).
\end{equation*} 
Note that not every combination of $p$ and $w$ yields a rearrangement-invariant quasinorm. 
An important special case of the $\Lambda$ spaces are the \textsl{Lorentz spaces}. Let $w(t):=t^{q/p-1}$, $t\in (0,\infty)$, where $p$, $q\in (0,\infty)$. We then define the Lorentz space 
$L^{p,q}$ with the following quasinorm
\begin{equation*}
    \norm{f}_{L^{p,q}}:=\norm{f}_{\Lambda^q{(w)}}, \ f\in \M(0,\infty).
\end{equation*}
Furthermore, we consider the \textsl{intersection} and the \textsl{sum} of quasinormed rearrangement-invariant spaces $X$, $Y$ equipped with the quasinorms 
\begin{equation*}
    \norm{f}_{X \cap Y}:=\max\{\norm{f}_X,\norm{f}_Y\}, \ f\in \M(0,\infty),
\end{equation*}
and 
\begin{equation*}
    \norm{f}_{X+Y}:=\inf_{f=g+h}(\norm{g}_X + \norm{h}_Y), \ f\in \M(0,\infty),
\end{equation*}
where the infimum is taken over all decompositions $f=g+h$ such that $g\in X$ and $h\in Y$. \par
A quasinormed rearrangement-invariant space $Y$ is a \textsl{range partner} for a given quasinormed rearrangement-invariant space $X$ with respect to a linear operator $T$ 
if $T\colon X\rightarrow Y$. We say that $Y$ is the \textsl{optimal range partner} for $X$ with respect to $T$ if one has $Y\hookrightarrow Z$, for every range partner $Z$ for $X$ with 
respect to $T$. Similarly, we define a \textsl{domain partner} with respect to $T$ and the \textsl{optimal domain partner} with respect to $T$, i.e.,\ the largest possible domain space. 
We may require these quasinormed rearrangement-invariant spaces to have additional properties. In that case, optimality is restricted to spaces with these properties. \par
For each $a\in (0,\infty)$, let $D_a$ denote the \textsl{dilation operator} defined by
\begin{equation*}
    (D_af)(t):=f(at),
\end{equation*}
for every $f\in\mathcal{M}_+(0,\infty)$ and $t\in (0,\infty)$. This operator is bounded on every quasinormed rearrangement-invariant space (see \cite[Theorem 3.23]{Nekvinda_2024}). 
Every quasinormed rearrangement-invariant space $X$ has its so-called \textsl{associate space} denoted as $X'$, equipped with the norm
\begin{equation*}
    \norm{f}_{X'}:=\sup\left\{\int_0^\infty \abs{fg} : g\in X \textrm{, }\norm{g}_X\leq1\right\}.
\end{equation*}
If $X$ has the property (P5), the space $X'$ is a nontrivial rearrangement-invariant space. This has been observed earlier in \cite[Remark 2.3]{Optimal_Sobolev}.
The norm $\norm{\cdot}_{X'}$ can also be expressed as
\begin{equation}\label{abs_out}
    \norm{f}_{X'}=\sup\left\{\int_0^\infty f^*g^* : g\in X \textrm{, }\norm{g}_X\leq1\right\}
\end{equation}
(proof is similar to that of \cite[Chapter 2, Proposition 2.4]{bs88}).
If a quasinormed rearrangement-invariant space $X$ has the property (P5), then
\begin{equation}\label{associate_ineq}
    \norm{f}_{X''}\leq \norm{f}_X,
\end{equation}
for any $f\in \M (0,\infty)$ (the proof is analogous to that of~\cite[Chapter 1, Theorem 2.7]{bs88}). 
Furthermore, if $X$ is a rearrangement-invariant space,~\eqref{associate_ineq} turns into an equality (see~\cite[Chapter~1, Theorem 2.7]{bs88}).\par
Let $X$, $Y$ be quasinormed rearrangement-invariant spaces and $f\in X+Y$. We define the \textsl{Peetre $K$-functional}, often abbreviated as $K$-functional, by
\begin{equation*}
    \K(f,t;X,Y)=\inf_{f=g+h}(\norm{g}_X + t \norm{h}_Y), \ t\in (0,\infty),
\end{equation*}
where the infimum is taken over all decompositions $f=g+h$ such that $g\in X$ and $h\in Y$. A trivial property of the $K$-functional is that
\begin{equation}\label{K_swap}
    \K(f,t;X,Y)=t\K\left(f,t^{-1};Y,X\right), \ t\in (0,\infty).
\end{equation}
The $K$-functional behaves well if we study it as a function of a real variable. Indeed, the function $\K(f,\cdot;X,Y)$ defined on $(0,\infty)$ 
is positive, increasing, and concave. 
It is known (see \cite[Chapter 5, Theorem 1.6]{bs88}) that
\begin{equation}\label{l1_linfty_K}
    \K(f,t;L^1,L^\infty)=\int_0^t f^*, \ t\in (0,\infty).
\end{equation}
Let $X$, $\tilde{X}$, $Y$, $\tilde{Y}$ be quasinormed rearrangement-invariant spaces and $T$ a linear operator such that 
$T\colon X\rightarrow \tilde{X}$ and $T\colon Y \rightarrow \tilde{Y}$. Define $C:=\max\{\norm{T}_{X\rightarrow \tilde{X}},\norm{T}_{Y\rightarrow \tilde{Y}}\}$. 
Then we have
\begin{equation}\label{K_property}
    \K(Tf,t;\tilde{X},\tilde{Y}) \leq C \K(f,t;X,Y), \ t\in (0,\infty).
\end{equation}
This elementary result, which can be easily proven from the definition, 
is the cornerstone of the interpolation properties of the $K$-functional. For example, given any $q\in (1,\infty)$ and $\theta \in (0,1)$,  
we have
\begin{equation}\label{real_interpolation}
    \norm{\K(Tf,t;\tilde{X},\tilde{Y}) t^{-\theta-\frac{1}{q}}}_{L^q} \leq C \norm{\K(f,t;X,Y) t^{-\theta-\frac{1}{q}}}_{L^q},
\end{equation}
where the $L^q$ norms are measured in the variable $t$. Notice that the spaces $X$, $Y$, $\tilde{X}$, $\tilde{Y}$ are not strictly required to be quasinormed rearrangement-invariant spaces. In this paper, we leverage this fact. \par
Let $b\colon (0,\infty)\rightarrow[0,\infty)$ be a nonincreasing function that is not identically zero. We define the \textsl{generalized inverse of $b$} as
    \begin{equation*}
        b^{-1}(t):=\begin{cases}    \sup\{s\in (0,\infty): b(s)\geq t\} & \textrm{if } t\in (0,b(0)),\\
                                    0 & \textrm{if } t\in [b(0),\infty).
                    \end{cases} 
    \end{equation*}
Throughout the paper, the symbol $b^{-1}$ will always denote the generalized inverse of $b$. \par

\section{Calderón-type estimate for integral kernel operators}\label{calderon}
In this section, we analyze a Calder\'on-type estimate for integral kernel operators and provide sufficient and necessary conditions required on the kernel of the operator
for this estimate to exist. We begin with a couple of key definitions, which we use for the rest of this section.

\begin{defn}\label{m_phi}
Let $\varphi\colon (0,\infty)\rightarrow (0,\infty)$ be a nondecreasing function. We define the space $m_\varphi$ as
    \begin{equation*}
        m_\varphi := \left\{f\in \M(0,\infty): \esssup_{t\in (0,\infty)} \varphi(t) f^*(t)<\infty\right\},
    \end{equation*}
    and
    \begin{equation*}
        \norm{f}_{m_\varphi}:=\esssup_{t\in (0,\infty)} \varphi(t) f^*(t), \ f\in \M(0,\infty).
    \end{equation*}
\end{defn}

    The space $m_\varphi$ is analogous in definition to the space $\Lambda^\infty(w)$ discussed in~\cite[Chapter 10]{pick2012function}. 
    This leads us to conclude that it generally may not be a quasinormed rearrangement-invariant space. 
    In fact, it may not even be a linear space (see~\cite[Theorem 1.6]{classical_lorentz}). 
    It is easy to see that the space at least satisfies (P4). Thus, it is nontrivial (not equal to $\{0\}$).
    We will still treat it as a quasinormed rearrangement-invariant space regardless, as it 
    simplifies the notation, and all the properties of quasinormed rearrangement-invariant spaces, namely, the triangle inequality, are not needed. \par

\begin{defn}
    We say that a measurable function $k\colon (0,\infty) \times (0,\infty) \rightarrow [0,\infty]$ is an admissible kernel if $k(\cdot,t)$ is integrable for a.e.\ $t\in (0,\infty)$. If $k$ is decreasing (nonincreasing) in the second variable, we call it a decreasing (nonincreasing) admissible kernel. 
    Let $k$ be an admissible kernel. We define the integral kernel operator $K_k$ as 
    \begin{equation*}
        K_k f(t):=\int_0^\infty f(s) k(s,t) \, ds, \ t\in (0,\infty),
    \end{equation*}
    for every $f\in \M(0,\infty)$ for which the integral makes sense.
\end{defn}

    To obtain our desired estimate, we will employ the $K$-functional,
    whose application will require that the operator in question acts
    boundedly between certain two pairs of `endpoint' spaces. In the following proposition, we show sufficient conditions on the kernel $k$ that give us this boundedness property.

\begin{prop}\label{kernel_conditions}
    Let $k$ be an admissible kernel, and define $w(t):= \norm{k(\cdot,t)}_{L^1}$, $t\in (0,\infty)$.
    Let there exist $C_1$, $C_2\in (0,\infty)$ and $\varphi\colon (0,\infty)\rightarrow (0,\infty)$ nondecreasing 
    such that
    \begin{equation}\label{L1_est}
        w^*(t)\leq \frac{C_1}{\varphi(t)}, \ \text{a.e.}\ t\in (0,\infty),
    \end{equation}
    \begin{equation}\label{Linf_est}
        \norm{k(\cdot,t)}_{L^\infty}\leq C_2, \ \text{a.e.} \ t\in (0,\infty).
    \end{equation} 
    Then $K_k\colon L^\infty \rightarrow m_\varphi$, $K_k\colon L^1\rightarrow~L^\infty$, $\norm{K_k}_{L^\infty \rightarrow m_\varphi}\leq C_1$, and 
    $\norm{K_k}_{L^1\rightarrow L^\infty}\leq C_2$.
\end{prop}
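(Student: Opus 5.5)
The two bounds are proved separately, each by a pointwise estimate of $K_kf$ followed by a monotonicity argument.

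\emph{The $L^1\to L^\infty$ bound.} Fix $f\in L^1$. For a.e.\ $t\in(0,\infty)$, H\"older's inequality and \eqref{Linf_est} give
\begin{equation*}
    \abs{K_kf(t)}\leq \int_0^\infty \abs{f(s)}k(s,t)\,ds \leq \norm{k(\cdot,t)}_{L^\infty}\norm{f}_{L^1}\leq C_2\norm{f}_{L^1}.
\end{equation*}
This bound also shows that the integral defining $K_kf(t)$ converges absolutely for a.e.\ $t$. Taking the essential supremum over $t$ yields $\norm{K_kf}_{L^\infty}\leq C_2\norm{f}_{L^1}$, which is the claimed bound.

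\emph{The $L^\infty\to m_\varphi$ bound.} Fix $f\in L^\infty$. For a.e.\ $t$ we have
\begin{equation*}
    \abs{K_kf(t)}\leq \norm{f}_{L^\infty}\int_0^\infty k(s,t)\,ds=\norm{f}_{L^\infty}w(t).
\end{equation*}
This integral converges because $k$ is admissible, so $k(\cdot,t)\in L^1$ for a.e.\ $t$, and hence $w<\infty$ a.e. By the monotonicity of the nonincreasing rearrangement recalled in Section~\ref{prelims}, and its positive homogeneity (immediate from the definition via $\mu_f$), we get $(K_kf)^*\leq \norm{f}_{L^\infty}w^*$ everywhere. Combining this with \eqref{L1_est},
\begin{equation*}
    \varphi(t)(K_kf)^*(t)\leq \norm{f}_{L^\infty}\varphi(t)w^*(t)\leq C_1\norm{f}_{L^\infty} \quad\text{for a.e. } t\in(0,\infty).
\end{equation*}
Taking the essential supremum gives $\norm{K_kf}_{m_\varphi}\leq C_1\norm{f}_{L^\infty}$.

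\emph{Bookkeeping.} There is no real obstacle here; the only care needed is measure-theoretic. First, $K_kf$ must be a measurable function of $t$. This follows from Tonelli's theorem applied to $\abs{f(s)}k(s,t)$, and to the positive and negative parts of $f$ separately, using the joint measurability of $k$. Second, the exceptional null sets in \eqref{L1_est}, \eqref{Linf_est} and in the admissibility of $k$ must be handled. Changing $K_kf$ on a null set does not affect $(K_kf)^*$ or the $L^\infty$ norm, so these null sets are harmless. Finally, the phrase ``operator norm'' is used only in the sense of these two inequalities, since $m_\varphi$ need not be a linear space (see the remark after Definition~\ref{m_phi}).
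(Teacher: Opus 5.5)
Your proof is correct and follows the paper's argument. The $L^1\to L^\infty$ bound comes from H\"older's inequality with \eqref{Linf_est}, and the $L^\infty\to m_\varphi$ bound comes from the pointwise estimate $\abs{K_kf}\le\norm{f}_{L^\infty}w$, monotonicity of the rearrangement, and \eqref{L1_est}. The paper leaves the pointwise step and your measurability and null-set bookkeeping implicit.
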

\begin{proof}
    Choose $g\in L^\infty$. 
    Using~\eqref{L1_est}, we get
    \begin{equation*}
        (K_kg)^*(t)=\left(\int_{0}^{\infty} g(s)k(s,\cdot)\,ds\right)^*(t)\leq \norm{g}_{L^\infty} \frac{C_1}{\varphi(t)}, \ \text{a.e.}\ t\in (0,\infty).
    \end{equation*}
    Multiplying by $\varphi(t)$ and taking the essential supremum over $t\in (0,\infty)$ on the left-hand side gives us $K_k\colon L^\infty \rightarrow m_\varphi$.
    Next, choose $h\in L^1$. Using~\eqref{Linf_est} and the H\"older inequality, we get
    \begin{equation*}
        \abs{K_k h(t)}\leq\int_{0}^{\infty} \abs{h(s)}k(s,t)\,ds \leq \norm{k(\cdot,t)}_{L^\infty} \norm{h}_{L^1} \leq C_2 \norm{h}_{L^1}, \ \text{a.e.}\ t\in (0,\infty).
    \end{equation*}
    This shows that $K_k\colon L^1\rightarrow L^\infty$. The estimates for the norms of $K_k$ are an immediate consequence 
    of the proven inequalities.
\end{proof}

\begin{rem}
    For the Laplace transform defined by $k(s,t):=e^{-st}$, we obtain $w(t)=\frac{1}{t}$ and $C_1=C_2=1$. This is 
    consistent with~\cite{buriankova2017optimal}.
\end{rem}

The next step in obtaining the estimate is to calculate the involved $K$-functional. 
The following proposition gives a characterization of this $K$-functional. While we only require the lower bound, we provide 
a full characterization for completeness. This result was already obtained for more general spaces in~\cite{Maligranda} and~\cite{Torchinsky}. However, these publications do not contain a calculation of the optimal constants, so we provide it here along with a full proof of the estimates for the reader's convenience.
\begin{prop}\label{inf_inequality}
    Let $\varphi\colon (0,\infty)\rightarrow (0,\infty)$ be a nondecreasing function and $f\in m_\varphi+L^\infty$. Then
    \begin{equation}\label{lower_k}
        \norm{\chi_{(0,t)}f^*}_{m_\varphi}\leq\K(f,\varphi(t);m_\varphi,L^\infty), \ t\in (0,\infty),
    \end{equation}
    and
    \begin{equation}\label{upper_k}
        \K(f,\varphi(t);m_\varphi,L^\infty)\leq 2\norm{\chi_{(0,t)}f^*}_{m_\varphi}, \ \text{a.e.}\ t\in (0,\infty),
    \end{equation}
    where~\eqref{upper_k} holds for every $t\in (0,\infty)$ if $\varphi$ is left-continuous.
    Moreover, the constant 1 in the inequality~\eqref{lower_k} is optimal. If $\norm{\varphi}_{L^\infty}=\infty$ and $\lim_{s\rightarrow0+}\varphi(s)=0$, 
    the constant 2 in~\eqref{upper_k} is also optimal.
\end{prop}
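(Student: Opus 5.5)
For the lower bound~\eqref{lower_k}, I will fix $t$ and an arbitrary decomposition $f=g+h$ with $g\in m_\varphi$, $h\in L^\infty$. For $s\in(0,t)$ I apply~\eqref{rear_ineq} with $t_1=s$, $t_2=0$, and then~\eqref{rear_l_infty}, to get
\[
f^*(s)\le g^*(s)+h^*(0)=g^*(s)+\norm{h}_{L^\infty}.
\]
Multiplying by $\varphi(s)$ and using $\varphi(s)\le\varphi(t)$ for $s<t$ yields
\[
\varphi(s)f^*(s)\le \varphi(s)g^*(s)+\varphi(t)\norm{h}_{L^\infty}.
\]
Since $(\chi_{(0,t)}f^*)^*=\chi_{(0,t)}f^*$, taking the essential supremum over $s\in(0,t)$ gives $\norm{\chi_{(0,t)}f^*}_{m_\varphi}\le \norm{g}_{m_\varphi}+\varphi(t)\norm{h}_{L^\infty}$. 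Taking the infimum over decompositions proves~\eqref{lower_k}. This part is routine.

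For the upper bound~\eqref{upper_k}, I will use the standard truncation at the level $f^*(t)$. Work with $\abs{f}$ (the $K$-functional depends only on $\abs f$ up to sign splitting) and set
\[
h:=\min\{\abs{f},f^*(t)\}\sgn f,\qquad g:=f-h.
\]
Then $\norm{h}_{L^\infty}\le f^*(t)$ and $g^*=(f^*-f^*(t))_+$, which vanishes on $(t,\infty)$ and is bounded by $f^*$ on $(0,t)$. Hence $\norm{g}_{m_\varphi}\le \norm{\chi_{(0,t)}f^*}_{m_\varphi}$. It remains to show $\varphi(t)f^*(t)\le\norm{\chi_{(0,t)}f^*}_{m_\varphi}$. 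This is the delicate point, because $t$ itself lies outside $(0,t)$ and the essential supremum ignores single points. For $s<t$ we have $\varphi(s)f^*(s)\ge\varphi(s)f^*(t)$, so $\norm{\chi_{(0,t)}f^*}_{m_\varphi}\ge \varphi(t-)f^*(t)$. If $\varphi$ is left-continuous, $\varphi(t-)=\varphi(t)$ for every $t$. In general, a monotone $\varphi$ has at most countably many discontinuities, so $\varphi(t-)=\varphi(t)$ for a.e.\ $t$, which gives the a.e.\ statement. Summing the two bounds gives the factor $2$. One extra point: if $f^*(t)=\infty$, then $\norm{\chi_{(0,t)}f^*}_{m_\varphi}=\infty$ and there is nothing to prove, and $f\in m_\varphi+L^\infty$ ensures that $f^*$ is finite otherwise.

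For optimality of $1$ in~\eqref{lower_k}, I take $f=\chi_{(0,a)}$. Splitting off no $L^\infty$ part gives $\K\le \norm{f}_{m_\varphi}$, which equals $\norm{\chi_{(0,t)}f^*}_{m_\varphi}$ when $t\ge a$, so equality holds. For optimality of $2$, I expect the main obstacle to be constructing extremal functions. Using $\norm\varphi_{L^\infty}=\infty$ and $\varphi(0+)=0$, I would try $f^*=\min\{1/\varphi,\,c\}$-type functions, or more simply $f=\chi_{(0,a)}$ with $t\ge a$, and evaluate $\K(f,\varphi(t))$ exactly. Decompositions $g=(1-\lambda)\chi_{(0,a)}$, $h=\lambda\chi_{(0,a)}$ give the value $\min\{\varphi(a),\varphi(t)\}$-type quantities; to force a ratio near $2$, the function should have $\norm{\chi_{(0,t)}f^*}_{m_\varphi}=1$ while every decomposition costs nearly $2$. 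Concretely, I would take $f^*=\min\{M,1/\varphi\}$ on a suitable range, choosing $M$ and $t$ so that both the $L^\infty$ part and the $m_\varphi$ part are forced to be large. The unboundedness of $\varphi$ and $\varphi(0+)=0$ are exactly what allow the two regimes to be separated, and I would optimize the parameters to send the ratio to $2$.
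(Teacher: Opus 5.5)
Your proofs of \eqref{lower_k}, of \eqref{upper_k} (including the left-continuous refinement), and of the optimality of the constant $1$ are correct. For \eqref{upper_k} your route is slightly better than the paper's. The paper works at continuity points of $\varphi f^*$, an exceptional set that depends on $f$, and then extends to all $t$ for left-continuous $\varphi$ by a limiting argument. Your bound $\esssup_{s\in(0,t)}\varphi(s)f^*(s)\ge\varphi(t-)f^*(t)$ gives the inequality directly at every point where $\varphi$ is left-continuous, with an exceptional set independent of $f$.

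The genuine gap is the optimality of the constant $2$. You do not prove it, and the concrete candidate you suggest cannot work. Take $f^*=\min\{M,1/\varphi\}$. Then $\varphi f^*\le 1$, so the decomposition $g=f$, $h=0$ gives $\K(f,\varphi(t);m_\varphi,L^\infty)\le 1$. The decomposition $g=0$, $h=f$ gives $\K(f,\varphi(t);m_\varphi,L^\infty)\le M\varphi(t)$. Meanwhile $\norm{\chi_{(0,t)}f^*}_{m_\varphi}=\min\{1,M\varphi(t-)\}$, so the ratio is at most $1$ at every continuity point of $\varphi$. Cutting $f$ off at some $R$ changes nothing, and characteristic functions also give ratio $1$, as your own computation already suggests. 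No choice of $M$ and $t$ pushes the ratio towards $2$. The shape is reversed: capping $f$ near $0$ lets $L^\infty$ absorb it, and decay like $1/\varphi$ at infinity lets $m_\varphi$ absorb it.

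The missing idea is the opposite profile, $f=\max\{1/\varphi,1/\varphi(t_0)\}$, i.e.\ $1/\varphi$ on $(0,t_0]$ and $1/\varphi(t_0)$ on $(t_0,\infty)$. Then $\norm{\chi_{(0,t_0)}f^*}_{m_\varphi}=1$, and splitting $f$ at $t_0$ gives $\K\le 2$. Every decomposition $f=g+h$ costs at least $2$:
\begin{itemize}
\item If $\norm{h}_{L^\infty}=c<1/\varphi(t_0)$, then $\abs{g}\ge 1/\varphi(t_0)-c>0$ a.e. Hence $\norm{g}_{m_\varphi}=\infty$, because $\varphi$ is unbounded.
\item If $\norm{g}_{m_\varphi}=c<1$, then $1/\varphi(s)=f^*(s)\le c/\varphi(s)+\norm{h}_{L^\infty}$ for a.e.\ $s\in(0,t_0)$. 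This is impossible because $\varphi(0+)=0$.
\end{itemize}
So $\K(f,\varphi(t_0);m_\varphi,L^\infty)=2$ exactly, with no optimization of parameters. This is precisely where the hypotheses $\norm{\varphi}_{L^\infty}=\infty$ and $\lim_{s\to0+}\varphi(s)=0$ enter, which your sketch never actually uses.
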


\begin{proof}
    We start with the inequality~\eqref{lower_k}. Choose $t\in (0,\infty)$. Using~\eqref{rear_ineq},~\eqref{rear_l_infty}, and the fact that $\varphi$ 
    is nondecreasing, we get, for a.e.\ $s\in (0,t)$,
    \begin{equation*}
        \begin{aligned}
            \K(f,\varphi(t);m_\varphi,L^\infty)&=\inf_{f=g+h} \left(\esssup_{u\in (0,\infty)} \varphi(u) g^*(u) + \varphi(t) \norm{h}_{L^\infty}\right) \\
            &\geq \inf_{f=g+h} \left(\varphi(s) g^*(s) + \varphi(s) \norm{h}_{L^\infty}\right)\\
            &=\inf_{f=g+h} \left(\varphi(s) g^*(s) + \varphi(s) h^*(0)\right) \\
            &=\varphi(s)\inf_{f=g+h} \left( g^*(s) + h^*(0)\right) \\
            &\geq \varphi(s)\inf_{f=g+h} \left( (g+h)^*(s)\right) = \varphi(s) f^*(s).
        \end{aligned}
    \end{equation*}
    Taking the essential supremum over $(0,t)$ yields $\norm{\chi_{(0,t)}f^*}_{m_\varphi}\leq \K(f,\varphi(t);m_\varphi,L^\infty)$. 
    This inequality also shows that $\norm{\chi_{(0,t)}f^*}_{m_\varphi}$ is finite for every $t\in (0,\infty)$ because $f\in m_\varphi+L^\infty$, 
    which makes the $K$-functional finite. To prove~\eqref{upper_k}, we note that for the set $X:=\{t\in (0,\infty) : \varphi f^* \text{ is continuous at }t\}$ we 
    have $\abs{(0,\infty)\setminus X}=0$.
    Choose $t\in X$ and define 
    \begin{equation*}
        h(s):=\min\{\abs{f(s)},f^*(t)\} \sgn{(f(s))}, \ s\in (0,\infty),
    \end{equation*}
    \begin{equation*}
        g(s):=(\abs{f(s)}-h(s))\sgn{(f(s))}=\max\{\abs{f(s)}-f^*(t),0\} \sgn{(f(s))}, \ s\in (0,\infty).
    \end{equation*}
    Then $f=g+h$. We observe that $\mu_{\min\{\abs{f},f^*(t)\}}=\mu_{\min\{f^*,f^*(t)\}}$ and $\mu_{\max\{\abs{f}-f^*(t),0\}}=\mu_{\max\{f^*-f^*(t),0\}}$. Thus, 
    \begin{equation*}
        h^*(s):=\min\{f^*(s),f^*(t)\}, \ s\in (0,\infty),
    \end{equation*}
    \begin{equation*}
        g^*(s):=\max\{f^*(s)-f^*(t),0\}=(f^*(s)-f^*(t))\chi_{(0,t)}(s), \ s\in (0,\infty).
    \end{equation*}
    Because $t\in X$, we have $\varphi(t)f^*(t)\leq \esssup_{s\in (0,t)} \varphi(s)f^*(s)$. We now proceed with the following calculation 
    \begin{equation*}
        \begin{aligned}
            \K(f,\varphi(t);m_\varphi,L^\infty)&\leq \norm{g}_{m_\varphi} + \varphi(t) \norm{h}_{L^\infty} \\
            &= \esssup_{s\in (0,t)} \varphi(s)(f^*(s)-f^*(t)) + \varphi(t) f^*(t) \\
            &\leq \esssup_{s\in (0,t)} \varphi(s)f^*(s) + \varphi(t)f^*(t) \\
            &\leq 2\esssup_{s\in (0,t)} \varphi(s)f^*(s) = 2 \norm{\chi_{(0,t)}f^*}_{m_\varphi}.
        \end{aligned}
    \end{equation*}
    If $\varphi$ is left-continuous, the function $\K(f, \varphi(\cdot);m_\varphi, L^\infty)$ is left-continuous because the $K$-functional 
    is continuous as a function of a real variable. Choose $t\in (0,\infty)$ and $(t_n)_{n=1}^\infty$ in $X$ such that $t_n\nearrow t$. Then $\K(f, \varphi(t_n);m_\varphi, L^\infty)\nearrow 
    \K(f, \varphi(t);m_\varphi, L^\infty)$ and $2\norm{\chi_{(0,t_n)}f^*}_{m_\varphi}=2\norm{\chi_{(0,t_n)}\varphi f^*}_{L^\infty}\nearrow 
    2\norm{\chi_{(0,t)}\varphi f^*}_{L^\infty}=2\norm{\chi_{(0,t)}f^*}_{m_\varphi}$, which extends the inequality to all $t\in (0,\infty)$. \par
    It remains to show the optimality of the constants 1 and 2. For~\eqref{lower_k}, consider the constant 
    function $f=\lim_{t\rightarrow 1_-}1/\varphi(t)$. Then $\norm{\chi_{(0,1)}f^*}_{m_\varphi}=1$. The trivial decomposition $g=0$ and 
    $h=f$ shows that $f$ saturates the inequality. For~\eqref{upper_k}, assume that $\norm{\varphi}_{L^\infty}=\infty$ 
    and $\lim_{s\rightarrow0+}\varphi(s)=0$. 
    We define 
    \begin{equation*}
        f(s):=\begin{cases} \frac{1}{\varphi(s)} & \text{if } s\in (0, 1], \\
                            \frac{1}{\varphi(1)} & \text{if } s\in (1,\infty).
            \end{cases}
    \end{equation*}
    Then $\norm{\chi_{(0,1)}f^*}_{m_\varphi}=1$. To prove $\K(f,\varphi(t);m_\varphi,L^\infty)\geq 2$, we will show that, for any decomposition 
    $f=g+h$ where $g\in m_\varphi$ and $h\in L^\infty$, it holds $\norm{g}_{m_\varphi}\geq 1$ 
    and $\norm{h}_{L^\infty}\geq 1/\varphi(1)$. Let $f=g+h$ be such a decomposition. Assume that there exists a $c \in (0,\infty)$ such 
    that $c=\norm{h}_{L^\infty}< 1/\varphi(1)$. Because $g=f-h$, 
    we have that $g\geq 1/\varphi(1) - c>0$. Then
    \begin{equation*}
        \norm{g}_{m_\varphi}= \esssup_{s\in (0,\infty)} \varphi(s) g^*(s)\geq \left(\frac{1}{\varphi(1)} - c\right) \norm{\varphi}_{L^\infty} = \infty.
    \end{equation*}
    This is a contradiction. Next, assume again that there exists a $c \in (0,\infty)$ such that $c=\norm{g}_{m_\varphi}< 1$. This implies that 
    $\varphi(s) g^*(s) \leq c$, for a.e.\ $s\in (0,\infty)$, which gives 
    \begin{equation*}
        g^*(s) \leq \frac{c}{\varphi(s)}, \ \text{a.e.}\ s\in (0,\infty).
    \end{equation*}
    Using~\eqref{rear_ineq}, we obtain that
    \begin{equation*}
        f^*(s)=(g+h)^*(s)\leq g^*(s) + \norm{h}_{L^\infty} \leq \frac{c}{\varphi(s)} + \norm{h}_{L^\infty},\ \text{a.e.}\ s\in (0,\infty).
    \end{equation*}
    However, this cannot be true as $f=f^*$ a.e.~and the function on the right-hand side is 
    smaller than $f$ near zero because $\lim_{s\rightarrow0+}\varphi(s)=0$. Consider the decomposition 
    $g=\chi_{(0,1]} f$ and $h=\chi_{(1,\infty)} f$. Then $f=g+h$, $\norm{g}_{m_\varphi}=1$, and $\norm{h}_{L^\infty}= 1/\varphi(1)$. 
    This finally shows that $\K(f,\varphi(1);m_\varphi,L^\infty)=2$, which proves the desired optimality.
\end{proof}

\begin{rem}
    The constant 2 in Proposition~\ref{inf_inequality} may not be optimal if the conditions imposed on $\varphi$ do not hold. Consider the constant function $\varphi=1$ 
    on $(0,\infty)$.
    Then $\norm{\cdot}_{m_\varphi}=\norm{\cdot}_{L^\infty}$, and it can be easily verified that
    \begin{equation*}
        \K(f,\varphi(t);m_\varphi,L^\infty)=\K(f,1;L^\infty,L^\infty)=\norm{f}_{L^\infty}=\norm{\chi_{(0,t)}f^*}_{m_\varphi},
    \end{equation*}
    for any $f\in L^\infty$, $t\in (0,\infty)$.
\end{rem}

Now, we use the previous result to obtain a Calder\'on-type estimate for our kernel operator.
\begin{thm}\label{kernel_estimate}
    Let $k$ be an admissible kernel, and let there exist a nondecreasing function $\varphi\colon (0,\infty)\rightarrow (0,\infty)$ 
    such that $K_k \colon L^\infty \rightarrow m_\varphi$ and $K_k \colon L^1\rightarrow L^\infty$. 
    Then for every $f\in L^1 + L^\infty$, we have
    \begin{equation}\label{estimate}
        (K_k f)^*(t)\leq C \int_0^{\frac{1}{\varphi(t)}} f^*, \ \text{a.e.}\ t\in (0,\infty),
    \end{equation}
    where $C=\max\{\norm{K_k}_{L^\infty \rightarrow m_\varphi}, \norm{K_k}_{L^1\rightarrow L^\infty}\}$. Furthermore, if $\varphi$ is right-continuous on $(0,\infty)$, the 
    inequality~\eqref{estimate} holds for every $t\in (0,\infty)$.
\end{thm}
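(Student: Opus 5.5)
The plan is to use the interpolation inequality \eqref{K_property} for the two couples $(L^\infty, L^1)$ and $(m_\varphi, L^\infty)$. By hypothesis $K_k\colon L^\infty\to m_\varphi$ and $K_k\colon L^1\to L^\infty$. Taking $X=L^\infty$, $Y=L^1$, $\tilde X=m_\varphi$, $\tilde Y=L^\infty$ in \eqref{K_property} gives
\begin{equation*}
    \K(K_kf,s;m_\varphi,L^\infty)\leq C\,\K(f,s;L^\infty,L^1), \ s\in (0,\infty),
\end{equation*}
for every $f\in L^1+L^\infty$. Here $C$ is as in the statement. The derivation of \eqref{K_property} only uses the definition of the $K$-functional, sublinearity of the norms, and the decompositions $Tf=Tg+Th$; it does not need the triangle inequality in $m_\varphi$. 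This matters because $m_\varphi$ need not be a linear space, in line with the remark following Definition~\ref{m_phi}. I will note explicitly that the elementary argument goes through.

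Next I evaluate the right-hand side. By \eqref{K_swap} and \eqref{l1_linfty_K},
\begin{equation*}
    \K(f,s;L^\infty,L^1)=s\,\K(f,s^{-1};L^1,L^\infty)=s\int_0^{1/s}f^*.
\end{equation*}
On the left-hand side I apply the lower bound \eqref{lower_k} of Proposition~\ref{inf_inequality} to $K_kf\in m_\varphi+L^\infty$, with $s=\varphi(t)$:
\begin{equation*}
    \operatorname*{ess\,sup}_{u\in(0,t)}\varphi(u)(K_kf)^*(u)\leq \K(K_kf,\varphi(t);m_\varphi,L^\infty)\leq C\varphi(t)\int_0^{1/\varphi(t)}f^*.
\end{equation*}

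The remaining step, which I expect to be the only delicate point, is to pass from this essential supremum over $(0,t)$ to a pointwise bound at $t$. Since $(K_kf)^*$ is nonincreasing and $\varphi$ is nondecreasing, for a.e.\ $u<t$ we have $\varphi(u)(K_kf)^*(t)\leq\varphi(u)(K_kf)^*(u)$. Letting $u\to t^-$ gives
\begin{equation*}
    \varphi(t-)(K_kf)^*(t)\leq C\varphi(t)\int_0^{1/\varphi(t)}f^*.
\end{equation*}
At every continuity point of $\varphi$ we have $\varphi(t-)=\varphi(t)$, so we may divide by $\varphi(t)>0$ and obtain \eqref{estimate}. A monotone function has at most countably many discontinuities, so \eqref{estimate} holds for a.e.\ $t$.

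For the everywhere statement under right-continuity, I would use right-continuity of both sides. The left side $(K_kf)^*$ is right-continuous. The right side $t\mapsto\int_0^{1/\varphi(t)}f^*$ is right-continuous because $\varphi$ is right-continuous and $x\mapsto\int_0^x f^*$ is continuous, the latter being finite since $f\in L^1+L^\infty$. Given any $t$, I pick continuity points $t_n\searrow t$, which is possible because these points are dense, and pass to the limit in \eqref{estimate}.
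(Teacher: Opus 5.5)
Your proof is correct and follows the paper's argument. You combine \eqref{K_property} with \eqref{l1_linfty_K} and \eqref{K_swap}, apply the lower bound \eqref{lower_k} from Proposition~\ref{inf_inequality}, and use the same right-continuity argument for the everywhere statement; the only cosmetic difference is that you pass from the essential supremum to a pointwise bound using monotonicity of $(K_kf)^*$ and the continuity points of $\varphi$ alone, whereas the paper works at the continuity points of the product $\varphi\cdot(K_kf)^*$.
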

\begin{proof}
    Fix $f\in L^1 + L^\infty$. By~\eqref{l1_linfty_K} and~\eqref{K_property}, we have
    \begin{equation*}
        \begin{aligned}
            \K(K_k f,t;L^\infty,m_\varphi) \leq C \K(f,t;L^1,L^\infty) = C \int_0^t f^*, \ t\in (0,\infty).
        \end{aligned}
    \end{equation*}
    We denote $X:={\{t\in (0,\infty) : \varphi \cdot (K_k f)^* \text{ is continuous at }t\}}$. 
    Then $\abs{(0,\infty)\setminus X}=0$, and $\varphi(t) (K_k f)^*(t)\leq \esssup_{0<s<t} \varphi(s) (K_k f)^*(s)$, $t\in X$.
    Combining these estimates with Proposition~\ref{inf_inequality} and~\eqref{K_swap}, we finally get
    \begin{equation*}
        \begin{aligned}
            (K_k f)^*(t)\leq \frac{1}{\varphi(t)} \K(K_k f,\varphi(t);m_\varphi,L^\infty) 
            =\K(K_k f,\frac{1}{\varphi(t)};L^\infty,m_\varphi) 
            \leq C \int_0^{\frac{1}{\varphi(t)}} f^*,
        \end{aligned}
    \end{equation*}
     for every $t\in X$. Now, when $\varphi$ is right-continuous on $(0,\infty)$, both sides of the inequality are right-continuous on $(0,\infty)$. 
     This means that we can extend the 
     inequality to all $t\in (0,\infty)$.
\end{proof}

We note that the requirements on the kernel $k$ in Proposition~\ref{kernel_conditions} are quite strong. To justify this, we will show that 
they are indeed necessary and cannot be avoided.
\begin{thm}\label{necessary_condtions}
    Let $k$ be an admissible kernel, and 
    let there exist a $C \in (0,\infty)$ and a nondecreasing function $\varphi\colon (0,\infty)\rightarrow (0,\infty)$ such that 
    \begin{equation*}
        (K_k f)^*(t)\leq C \int_0^{\frac{1}{\varphi(t)}} f^*, \ f\in L^1 + L^\infty, \ \text{a.e.}\ t\in (0,\infty).
    \end{equation*}
    Define $w(t):= \norm{k(\cdot,t)}_{L^1}$, $t\in (0,\infty)$. Then $k$ satisfies~\eqref{L1_est} and~\eqref{Linf_est}, where $C_1=C_2=C$. 
\end{thm}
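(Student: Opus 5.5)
The plan is to test the assumed Calderón-type inequality on two families of functions. Constant functions will give \eqref{L1_est}, and normalized characteristic functions of intervals will give \eqref{Linf_est}. In both cases the constant obtained is the $C$ from the hypothesis.

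For \eqref{L1_est}, take $f\equiv 1$, which lies in $L^\infty\subset L^1+L^\infty$ and has $f^*\equiv 1$. Since $k\geq 0$, we get $K_k 1(t)=\int_0^\infty k(s,t)\,ds=w(t)$. The function $w$ is measurable by Tonelli's theorem, and it is finite a.e.\ because $k$ is admissible. The hypothesis then gives
\begin{equation*}
w^*(t)=(K_k1)^*(t)\leq C\int_0^{1/\varphi(t)}1 = \frac{C}{\varphi(t)}, \ \text{a.e.}\ t\in(0,\infty),
\end{equation*}
which is exactly \eqref{L1_est} with $C_1=C$.

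For \eqref{Linf_est}, fix an interval $I=(a,b)$ with rational endpoints $0\le a<b$ and put $f=\chi_I/\abs{I}\in L^1$. For every $t$ we have $C\int_0^{1/\varphi(t)}f^*\leq C\norm{f}_{L^1}=C$, so $(K_kf)^*(t)\leq C$ for a.e.\ $t$. Because $(K_kf)^*$ is nonincreasing and right-continuous, this bound holds for all $t>0$. Using \eqref{rear_l_infty} (taking the limit as $t\to0+$), we conclude $\esssup \abs{K_kf}\leq C$. Hence there is a null set $N_I$ such that
\begin{equation*}
\frac{1}{\abs{I}}\int_I k(s,t)\,ds\leq C,\quad t\notin N_I.
\end{equation*}

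The one delicate point is passing from these averages to the pointwise bound $k(\cdot,t)\le C$ a.e., because a null set for each interval is not directly enough. The fix is to use only countably many intervals. Let $N$ be the union of all $N_I$ over rational intervals together with the null set of $t$ for which $k(\cdot,t)\notin L^1$; then $N$ is null. For $t\notin N$, the function $k(\cdot,t)$ is integrable and all its averages over rational intervals are at most $C$. Every Lebesgue point $s$ of $k(\cdot,t)$ is a limit of averages over rational intervals shrinking to $s$, so the Lebesgue differentiation theorem gives $k(s,t)\leq C$ for a.e.\ $s$. Therefore $\norm{k(\cdot,t)}_{L^\infty}\leq C$ for a.e.\ $t$, which is \eqref{Linf_est} with $C_2=C$.
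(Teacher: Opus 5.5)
Your proof is correct and follows the paper's argument: you test with $f\equiv 1$ to get \eqref{L1_est}, and with (normalized) characteristic functions of intervals, combined with right-continuity of the rearrangement, \eqref{rear_l_infty} and the Lebesgue differentiation theorem, to get \eqref{Linf_est}. You restrict to intervals with rational endpoints so that the exceptional null sets in $t$ can be united into a single null set before differentiating in $s$, which makes rigorous a quantifier point that the paper's proof passes over silently.
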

\begin{proof}
    For~\eqref{L1_est}, put $f= 1$. Then $f\in L^1 + L^\infty$, and we get
    \begin{equation*}
        w^*(t)=(K_k f)^*(t)\leq \frac{C}{\varphi(t)}, \ \text{a.e.}\ t\in (0,\infty).
    \end{equation*}
    For~\eqref{Linf_est}, 
    let $E$ be a finite interval in $(0,\infty)$, and put $f=\chi_E$. Then 
    \begin{equation*}
        (K_k f)^*(t)=\left(\int_E k(s,\cdot) \, ds\right)^*(t) \leq C \int_0^{\frac{1}{\varphi(t)}} \chi_{(0,|E|)} \leq C |E|, \ \text{a.e.}\ t\in (0,\infty).
    \end{equation*}
    Because both sides of the inequality are right-continuous, we may extend this inequality to all $t\in (0,\infty)$ and, more importantly, to $t=0$. By~\eqref{rear_l_infty}, this gives
    \begin{equation*}
        \frac{1}{|E|} \int_E k(s,t) \, ds \leq C, \ \text{a.e.}\ t\in (0,\infty).
    \end{equation*}
    By Lebesgue's differentiation theorem, we have that
    \begin{equation*}
        k(x,t)=\lim_{\substack{|E|\rightarrow 0_+ \\ E \ni x}}\frac{1}{|E|} \int_E k(s,t) \, ds \leq C, \ \text{a.e.}\ t\in (0,\infty),
    \end{equation*}
    for almost every $x\in (0,\infty)$. From here, \eqref{Linf_est} follows immediately.
\end{proof}

Finally, we present a theorem, which combines all results that we have shown so far.
\begin{thm}\label{estimate_equivalence}
    Let $k$ be an admissible kernel, and let $\varphi\colon (0,\infty)\rightarrow (0,\infty)$ be a nondecreasing function and $C \in (0,\infty)$. Define $w(t):= \norm{k(\cdot,t)}_{L^1}$, $t\in (0,\infty)$.
    Then the following statements are equivalent:
    \begin{enumerate}[label=(\roman*)]
        \item   $
                \begin{aligned}[t]
                    w^*(t)\leq \frac{C}{\varphi(t)}, \ \text{a.e.}\ t\in (0,\infty) \textrm{, and } 
                    \norm{k(\cdot,t)}_{L^\infty}\leq C, \ \text{a.e.}\ t\in (0,\infty),
                \end{aligned}
                $
        \item   $K_k\colon L^\infty \rightarrow m_\varphi$, $K_k\colon L^1\rightarrow~L^\infty$, $\norm{K_k}_{L^\infty \rightarrow m_\varphi}\leq C$, and 
                $\norm{K_k}_{L^1\rightarrow L^\infty}\leq C$,
        \item   $
                \begin{aligned}[t]
                    (K_k f)^*(t)\leq C \int_0^{\frac{1}{\varphi(t)}} f^*, \ f\in L^1 + L^\infty, \ \text{a.e.}\ t\in (0,\infty).
                \end{aligned}
                $
    \end{enumerate}
\end{thm}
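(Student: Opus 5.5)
The plan is to prove the cycle of implications (i) $\Rightarrow$ (ii) $\Rightarrow$ (iii) $\Rightarrow$ (i), since each arrow is essentially one of the results already established in this section.

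For (i) $\Rightarrow$ (ii), we invoke Proposition~\ref{kernel_conditions} with $C_1=C_2=C$. Its hypotheses~\eqref{L1_est} and~\eqref{Linf_est} are exactly the two conditions in (i). It gives $K_k\colon L^\infty\to m_\varphi$ and $K_k\colon L^1\to L^\infty$, with $\norm{K_k}_{L^\infty\to m_\varphi}\le C$ and $\norm{K_k}_{L^1\to L^\infty}\le C$.

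For (ii) $\Rightarrow$ (iii), we apply Theorem~\ref{kernel_estimate}. The two boundedness assumptions required there are given in (ii). So for every $f\in L^1+L^\infty$ we get~\eqref{estimate} for a.e.\ $t$, with constant $\max\{\norm{K_k}_{L^\infty\to m_\varphi},\norm{K_k}_{L^1\to L^\infty}\}$. By (ii) this maximum is at most $C$. Since $\int_0^{1/\varphi(t)}f^*\ge 0$, replacing the constant by the larger $C$ preserves the inequality, which is (iii).

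For (iii) $\Rightarrow$ (i), we apply Theorem~\ref{necessary_condtions} directly. Its hypothesis is verbatim statement (iii), and its conclusion is~\eqref{L1_est} and~\eqref{Linf_est} with $C_1=C_2=C$, which is (i).

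I do not expect a genuine obstacle, since all three implications are citations. The only points needing care are bookkeeping. First, the constant from Theorem~\ref{kernel_estimate} is a maximum of two operator norms, and we must note that it is dominated by $C$. Second, the "a.e.\ $t$" qualifiers must match across the statements; they do, because Theorem~\ref{kernel_estimate} only asserts the estimate almost everywhere for general $\varphi$, and that is all (iii) asks for.
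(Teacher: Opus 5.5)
Your proof is correct and is exactly the paper's argument: the paper also gets the equivalence by chaining Proposition~\ref{kernel_conditions} for (i)$\Rightarrow$(ii), Theorem~\ref{kernel_estimate} for (ii)$\Rightarrow$(iii), and Theorem~\ref{necessary_condtions} for (iii)$\Rightarrow$(i). Your observation that $\max\{\norm{K_k}_{L^\infty\to m_\varphi},\norm{K_k}_{L^1\to L^\infty}\}\le C$ is precisely the bookkeeping the paper leaves implicit.
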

\begin{proof}
    We simply use Proposition~\ref{kernel_conditions} and Theorems~\ref{kernel_estimate},~\ref{necessary_condtions}.
\end{proof}

From Theorem~\ref{estimate_equivalence}, we may obtain the following corollary by using the `optimal' values for $\varphi$ and $C$.

\begin{cor}\label{sufficient_conditions}
Let $k$ be an admissible kernel. Define $w(t):= \norm{k(\cdot,t)}_{L^1}$, $t\in (0,\infty)$. Suppose that $C:=\esssup_{t\in (0,\infty)} \norm{k(\cdot,t)}_{L^\infty}<\infty$. Then 
\begin{equation*}
    (K_k f)^*(t)\leq C \int_0^{\frac{w^*(t)}{C}} f^*, \ f\in L^1 + L^\infty, \ \text{a.e.}\ t\in (0,\infty).
\end{equation*}
\end{cor}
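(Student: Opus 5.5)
The plan is to apply Theorem~\ref{estimate_equivalence}, specifically the implication (i)$\Rightarrow$(iii), with the ``optimal'' choice $\varphi(t):=C/w^*(t)$. Then (i) holds with equality in the first condition, and (iii) gives exactly the claimed bound, since $1/\varphi(t)=w^*(t)/C$.

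The difficulty is that this $\varphi$ need not map $(0,\infty)$ into $(0,\infty)$. It equals $+\infty$ where $w^*=0$, and $0$ where $w^*=\infty$. It is nondecreasing, because $w^*$ is nonincreasing. So I would regularize and pass to a limit.

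\textbf{Step 1: set up the two degenerate regions.} Since $w^*$ is nonincreasing, $\{w^*=\infty\}$ is an interval $(0,t_1)$ (possibly empty). On this interval the right-hand side of the claim is $C\int_0^\infty f^*=C\norm{f}_{L^1}$. The bound there follows directly: $(K_kf)^*(t)\le (K_kf)^*(0)=\norm{K_kf}_{L^\infty}\le C\norm{f}_{L^1}$, using~\eqref{rear_l_infty} and the H\"older estimate from Proposition~\ref{kernel_conditions}, which needs only~\eqref{Linf_est}. If $f\notin L^1$, the right-hand side is infinite and there is nothing to prove.

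\textbf{Step 2: treat $t>t_1$ by regularization.} I would work with the truncated kernel $\tilde k(s,t):=k(s,t)\chi_{\{w(t)\le M\}}$, for which $\tilde w\le\min\{w,M\}$. Then I apply Theorem~\ref{estimate_equivalence} to $\tilde k$ with
\begin{equation*}
\varphi_\varepsilon(t):=\frac{C}{\tilde w^*(t)+\varepsilon}.
\end{equation*}
This function is nondecreasing, takes values in $(0,\infty)$, and satisfies $\tilde w^*\le C/\varphi_\varepsilon$ and $\norm{\tilde k(\cdot,t)}_{L^\infty}\le C$. This yields
\begin{equation*}
(K_{\tilde k}f)^*(t)\le C\int_0^{(\tilde w^*(t)+\varepsilon)/C}f^*.
\end{equation*}
Letting $\varepsilon\to0_+$ is harmless, because $s\mapsto\int_0^sf^*$ is continuous wherever it is finite. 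In particular the region where $w^*=0$ needs no special care: there we obtain $(K_{\tilde k}f)^*(t)\le 0$.

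\textbf{Step 3: return from $\tilde k$ to $k$.} This is the step I expect to be the main obstacle. We have $K_kf=K_{\tilde k}f$ outside the set $\{w>M\}$, whose measure is at most $\mu_w(M)$. On that set, $|K_kf|\le C\norm{f}_{L^1}$. By~\eqref{rear_ineq},
\begin{equation*}
(K_kf)^*(t)\le (K_{\tilde k}f)^*\bigl(t-\mu_w(M)\bigr)+\bigl(K_kf\,\chi_{\{w>M\}}\bigr)^*\bigl(\mu_w(M)\bigr),
\end{equation*}
and the last term vanishes. As $M\to\infty$ we have $\mu_w(M)\searrow|\{w^*=\infty\}|=t_1$. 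I would therefore take $M\to\infty$ and use right-continuity of $(K_{\tilde k}f)^*$, together with $\tilde w^*\le w^*$, to recover the bound at a.e.\ $t>t_1$. The delicate point is getting the shift $t-\mu_w(M)\to t$ to interact correctly with the a.e.\ quantifiers and with the monotone-in-$M$ convergence of the kernels.

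If this bookkeeping becomes awkward, the fallback is to skip the truncation. I would instead rerun the $K$-functional argument of Theorem~\ref{kernel_estimate} directly on $(t_1,\infty)$, with $\varphi_\varepsilon=C/(w^*+\varepsilon)$ there, using that only values of $\varphi$ on $(0,t)$ enter Proposition~\ref{inf_inequality}.
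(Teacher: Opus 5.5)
Your core step, applying (i)$\Rightarrow$(iii) of Theorem~\ref{estimate_equivalence} with the `optimal' choice $\varphi=C/w^*$, is exactly the paper's one-line proof, and your $\varepsilon$-regularization correctly repairs a point the paper passes over silently: $C/w^*$ can take the values $0$ or $\infty$. You can drop the truncation and Step~3 entirely, because admissibility gives $w<\infty$ a.e., so either $\mu_w(M)=\infty$ for all $M$ (then $w^*\equiv\infty$ and Step~1 suffices) or $\mu_w(M)\searrow\abs{\{w=\infty\}}=0$, whence $w^*(t)<\infty$ for every $t>0$ and $\varphi_\varepsilon=C/(w^*+\varepsilon)$ works directly for $k$ on all of $(0,\infty)$; if you keep Step~3, note that $t-\mu_w(M)\nearrow t$ yields $w^*(t^-)$, which equals $w^*(t)$ off a countable set, so continuity points of $w^*$ close the argument, not right-continuity.
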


\begin{rem}
Later, we often work with kernels for which $w$ is nonincreasing and right-continuous. This means that we can work with $w$ directly instead of using $w^*$.
\end{rem}

\section{Properties of integral operators having a variable upper bound}\label{variable_integral}
We now turn our attention to extending the principal result of~\cite{buriankova2017optimal} to the context of more general operators. 
Namely, we find optimal endpoints for the Calderón-type operators that appeared in Theorem~\ref{kernel_estimate}. We first define the main operator of interest.

\begin{defn}\label{def01:1}
    Let $b\colon (0,\infty)\rightarrow[0,\infty)$ be a nonincreasing function that is not identically zero. 
    We define $T_{b}\colon \M(0,\infty)\rightarrow\M_+(0,\infty)$ as
    \begin{equation*}
        T_{b}f(t):=\int_{0}^{b(t)}f^*,
    \end{equation*}
    for every $f\in \M(0,\infty)$, $t\in (0,\infty)$.
\end{defn}

We want to show that we can derive a new rearrangement-invariant space from an existing one using this operator. 
In the following theorem, we prove that it is indeed possible, at least under certain mild restrictions.

\begin{thm}\label{thm01:1}
    Let $b\colon (0,\infty)\rightarrow[0,\infty)$ be a nonincreasing function that is not identically zero.
    Let $Y$ be a quasinormed rearrangement-invariant space such that
    \begin{equation}\label{eq:min}
        \min\{1,b\}\in Y.
    \end{equation}
    Then the~functional $\rho$ defined by
    \begin{equation*}
        \rho (f)= \lVert T_{b}f\rVert_Y, \ f\in\mathcal{M}_+(0,\infty),
    \end{equation*}
    is a rearrangement-invariant quasinorm, additionally satisfying (P5). Moreover, if~\eqref{eq:min} is not true, $\rho$ is not a rearrangement-invariant quasinorm.
\end{thm}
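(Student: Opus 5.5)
The plan is to verify the axioms (Q1), (P2), (P3), (P4), (P5), (P6) for $\rho$ one at a time, using that $T_b f$ depends only on $f^*$, and then to show that (P4) fails when \eqref{eq:min} fails.

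\textbf{(P6) and (P2).} Property (P6) is immediate, since $T_b f$ is defined through $f^*$ alone. For (P2), if $f\le g$ a.e., then $f^*\le g^*$, hence $T_bf\le T_bg$ pointwise. Property (P2) of $\norm{\cdot}_Y$ then gives $\rho(f)\le\rho(g)$.

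\textbf{(Q1).} Homogeneity is clear. For the quasi-triangle inequality, \eqref{fake_subaditivity} gives
\begin{equation*}
T_b(f+g)\le T_bf+T_bg
\end{equation*}
pointwise. Applying (P2) and (Q1) for $Y$ yields the modulus of concavity of $Y$ for $\rho$ as well.

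For definiteness, $\rho(f)=0$ forces $T_bf=0$ a.e. Since $b$ is not identically zero and is nonincreasing, $b>0$ on some interval $(0,t_0)$. On that interval $\int_0^{b(t)}f^*=0$ with $b(t)>0$, so $f^*=0$ near $0$. Because $f^*$ is nonincreasing, $f^*\equiv 0$, and therefore $f=0$ a.e.

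\textbf{(P3).} If $f_j\nearrow f$, then $f_j^*\nearrow f^*$. By monotone convergence $T_bf_j\nearrow T_bf$ pointwise, and (P3) for $Y$ gives $\rho(f_j)\nearrow\rho(f)$.

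\textbf{(P4).} Take $G$ with $|G|=a<\infty$. Then
\begin{equation*}
T_b\chi_G(t)=\min\{a,b(t)\}\le \max\{a,1\}\min\{1,b(t)\},
\end{equation*}
so $\rho(\chi_G)\le \max\{a,1\}\,\norm{\min\{1,b\}}_Y<\infty$ by \eqref{eq:min}.

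\textbf{(P5).} I expect this to be the main obstacle, since $Y$ itself need not satisfy (P5) and so it cannot be borrowed. The idea is to use that $T_bf$ controls an integral of $f^*$ on a fixed set. Fix $t_0$ with $b(t_0)=:\beta>0$. For $t\in(0,t_0)$ we have $b(t)\ge\beta$, so
\begin{equation*}
T_bf(t)\ge\int_0^\beta f^*\qquad\text{on }(0,t_0).
\end{equation*}
Hence
\begin{equation*}
\rho(f)\ge \Big(\int_0^\beta f^*\Big)\norm{\chi_{(0,t_0)}}_Y ,
\end{equation*}
and $\norm{\chi_{(0,t_0)}}_Y>0$ by (Q1). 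For $|G|\le\beta$ the Hardy–Littlewood inequality gives $\int_G f\le\int_0^\beta f^*\le C\rho(f)$. For larger $|G|$, use $\int_0^{|G|}f^*\le (|G|/\beta+1)\int_0^\beta f^*$, which holds by monotonicity of $f^*$ (split $(0,|G|)$ into pieces of length $\beta$). This gives $C_G$ depending on $|G|$.

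\textbf{Converse.} Suppose $\min\{1,b\}\notin Y$, and take $G=(0,1)$. Then $T_b\chi_G=\min\{1,b\}$, so $\rho(\chi_G)=\infty$ and (P4) fails. Hence $\rho$ is not a rearrangement-invariant quasinorm.
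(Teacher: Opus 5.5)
Your proposal is correct and follows the paper's proof essentially step by step. Both arguments verify the axioms one at a time, using \eqref{fake_subaditivity} for the quasi-triangle inequality, monotone convergence for (P3), and the identity $T_b\chi_{(0,1)}=\min\{1,b\}$ for (P4) and for the converse. For (P5), both combine the Hardy--Littlewood inequality, a lower bound for $T_bf$ on an initial interval, and the monotonicity of $f^*$.

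There are two minor differences, both harmless and slightly more self-contained than the paper's version. First, your pointwise bound $\min\{a,b\}\le\max\{a,1\}\min\{1,b\}$ gives (P4) directly. The paper instead appeals to boundedness of the dilation operator, and as used there this presupposes that $\rho$ is already a quasinorm. Second, your single choice $\beta=b(t_0)$ replaces the paper's case split according to $A=\lim_{t\to0+}b(t)$.
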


\begin{proof}
    It is necessary to verify all rearrangement-invariant quasinorm axioms, namely (Q1), (P2)-(P4), and (P6). The inequality in (Q1) can be easily proven using~\eqref{fake_subaditivity}, 
    and the rest of (Q1) is trivial. (P2) follows from the monotonicity of the nonincreasing rearrangement and (P2) for $Y$. 
    (P3) can be obtained using the monotone convergence theorem. (P6) is trivial. 
    For (P4), we note that $\rho (\chi_{(0,1)})<\infty$. 
    This follows from~\eqref{eq:min} and the~fact that
    \begin{equation*}
        T_{b}(\chi_{(0,1)})(t)=\min\{1,b\}, \ t\in (0,\infty).
    \end{equation*}
    The~boundedness of the~dilation operator on quasinormed rearrangement-invariant spaces implies that $\rho (\chi_{(0,a)})<\infty$ for every $a\in (0,\infty)$. Now,
    let $E\subset (0,\infty)$ be such that $\lvert E \rvert \in (0,\infty)$. Because $\chi_E^* = \chi_{(0,\lvert E \rvert)}$, we obtain
    \begin{equation*}
        \rho (\chi_E)=\rho (\chi_E^*)=\rho(\chi_{(0,\lvert E \rvert)}) < \infty.
    \end{equation*}
    This proves (P4) and the necessity of~\eqref{eq:min}. As for proving (P5), choose $f\in\mathcal{M}_+(0,\infty)$. We first note that by the~Hardy-Littlewood inequality, we have
    \begin{equation}\label{eq:hl}
        \int_{E}f \leq \int_{0}^{\lvert E \rvert} f^{*},
    \end{equation}
    where $E\subset (0,\infty)$ and $\lvert E \rvert \in (0,\infty)$. Furthermore, we note that $A:=\lim_{t\rightarrow 0_+}b(t)$ exists, and $A\in (0,\infty]$ because $b$ is nonincreasing and $b\not\equiv 0$. 
    We will consider two cases. First, we choose $E\subset (0,\infty)$ satisfying $\lvert E \rvert\in (0,A)$.
    By the properties of $A$, we find $\xi\in (0,\infty)$ such that $b(\xi)>\abs{E}$. It follows that
    \begingroup
        \addtolength{\jot}{0.3em}
        \begin{equation*}
            \begin{aligned}
                \rho(f)&= \left\lVert \int_{0}^{b(t)}f^{*} \, \right\rVert_Y 
                \stackrel{\mathclap{\text{(P2)}}}{\geq}\left\lVert \chi_{(0,\xi)}(t) \int_{0}^{b(t)}f^{*} \, \right\rVert_Y \\
                &\geq \lVert \chi_{(0,\xi)} \rVert_Y \int_{0}^{b(\xi)}f^{*} 
                \geq \lVert \chi_{(0,\xi)} \rVert_Y \int_{0}^{\lvert E \rvert}f^{*} 
                \stackrel{\mathclap{\eqref{eq:hl}}}{\geq} \lVert \chi_{(0,\xi)} \rVert_Y \int_{E}f,
            \end{aligned}
        \end{equation*}
        hence,
        \begin{equation*}
            \int_{E}f \leq C_E \, \rho(f),
        \end{equation*}
        where $C_E=1/\lVert \chi_{(0,\xi)} \rVert_Y$. If $A=\infty$, the proof is now complete. Now, supposing $A$ is finite, we consider $\lvert E \rvert \in [A,\infty)$. Using the previous case and the fact that 
        the integral mean of a nonincreasing positive function is nonincreasing, we get that
        \begin{equation*}
            \begin{aligned}
                \int_{E}f &\leq \int_{0}^{\lvert E \rvert}f^{*} \leq \frac{2\abs{E}}{A} \, \int_{0}^{A/2} f^{*} 
                 \leq \rho(f^*) \, \frac{2\abs{E}}{A} \, C_{(0,A/2)} 
                \stackrel{\mathclap{\text{(P6)}}}{=} \rho(f) \, \frac{2\abs{E}}{A} \, C_{(0,A/2)}.
            \end{aligned}
        \end{equation*}
    \endgroup
\end{proof}

Now, we may define the above-mentioned derived quasinormed rearrangement-invariant space.

\begin{defn}\label{def01:3}
    Let $b\colon (0,\infty)\rightarrow[0,\infty)$ be a nonincreasing function that is not identically zero. Let $X$ be a 
    quasinormed rearrangement-invariant space such that
    \begin{equation}
        \min\{1,b\}\in X.
    \end{equation}
    Consider the rearrangement-invariant quasinorm $\rho$ defined by
    \begin{equation*}
        \rho (f)= \lVert T_{b}f\rVert_X, \ f\in\mathcal{M}_+(0,\infty).
    \end{equation*}
    We then define the space $\Upsilon_{b,X}:=\{f\in \mathcal{M}(0,\infty): \rho(\abs{f})<\infty\}$.
\end{defn}

\begin{rem}\label{rem01:2}
    Theorem~\ref{thm01:1} asserts that $\Upsilon_{b,X}$ is a quasinormed rearrangement-invariant 
    space and its norm satisfies (P5). Additionally, if $X$ is a rearrangement-invariant 
    space, $\Upsilon_{b,X}$ is also a rearrangement-invariant space.
\end{rem}

\begin{cnv}
    In this section, we will denote $\lim_{t\rightarrow 0_+} b(t)$ as $b(0)$.
\end{cnv}
 
\begin{rem}\label{rem01:1}
    It will be useful to note that $\{(t,s): t\in (0,\infty) \textrm{ and } s\in (0,b(t))\}=\{(t,s): s\in(0,b(0)) \textrm{ and } t\in (0,b^{-1}(s))\}$ 
    when $b$ is a nonincreasing function that is not identically zero.
\end{rem}

The following well-known duality property of $T_b$ will help us in proving our final result. 

\begin{lemma}\label{prop01:1}
    Let $b\colon (0,\infty)\rightarrow[0,\infty)$ be a nonincreasing function that is not identically zero. Suppose $X$ and $Y$ are rearrangement-invariant spaces.  
    Then $T_{b}\colon X\rightarrow Y$ if and only if $T_{b^{-1}}\colon Y'\rightarrow X'$. Moreover, $\norm{T_b}_{X\rightarrow Y}=\norm{T_{b^{-1}}}_{Y'\rightarrow X'}$.
\end{lemma}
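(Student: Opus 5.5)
The plan is to reduce everything to a single duality identity: for all $f,g\in\M(0,\infty)$,
\begin{equation*}
    \int_0^\infty T_bf(t)\, g^*(t)\,dt=\int_0^\infty f^*(s)\, T_{b^{-1}}g(s)\,ds .
\end{equation*}
To prove it, I will write the left-hand side as $\int_0^\infty\int_0^{b(t)} f^*(s)g^*(t)\,ds\,dt$. By Tonelli's theorem (all integrands are nonnegative) and the description of the region in Remark~\ref{rem01:1}, this equals $\int_0^{b(0)} f^*(s)\int_0^{b^{-1}(s)} g^*(t)\,dt\,ds$. Since $b^{-1}(s)=0$ for $s\ge b(0)$, extending the outer integral to $(0,\infty)$ changes nothing, so it equals $\int_0^\infty f^* T_{b^{-1}}g$. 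Here I need $b^{-1}$ to be nonincreasing and not identically zero, so that $T_{b^{-1}}$ is defined as in Definition~\ref{def01:1}. This is immediate from the definition of the generalized inverse, since $b\not\equiv0$ gives $b^{-1}>0$ on $(0,b(0))$. The one delicate point is the boundary of the region in Remark~\ref{rem01:1}, namely whether $t<b^{-1}(s)$ or $t\le b^{-1}(s)$. The discrepancy lies on a graph, which is a null set in the plane, so Tonelli is unaffected.

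Next I will use the norm formula~\eqref{abs_out} together with the fact that $T_bf$ is nonincreasing and nonnegative, so that $(T_bf)^*=T_bf$ a.e. Assume $T_b\colon X\to Y$ with norm $M$. For $g\in Y'$, \eqref{abs_out} applied in $X'$ gives
\begin{equation*}
    \norm{T_{b^{-1}}g}_{X'}=\sup_{\norm{f}_X\le1}\int_0^\infty f^*\,T_{b^{-1}}g=\sup_{\norm{f}_X\le1}\int_0^\infty T_bf\, g^*\le \sup_{\norm{f}_X\le 1}\norm{T_bf}_Y\norm{g}_{Y'}\le M\norm{g}_{Y'},
\end{equation*}
where the first inequality is the H\"older inequality for associate spaces. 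Hence $\norm{T_{b^{-1}}}_{Y'\to X'}\le\norm{T_b}_{X\to Y}$.

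For the converse I will apply the same argument with $b^{-1}$ in place of $b$ and the pair $(Y',X')$ in place of $(X,Y)$. This yields $T_{(b^{-1})^{-1}}\colon X''\to Y''$ with norm at most $\norm{T_{b^{-1}}}_{Y'\to X'}$. Since $X$ and $Y$ are rearrangement-invariant (Banach) spaces, $X''=X$ and $Y''=Y$ with equal norms, by the Lorentz--Luxemburg theorem (equality case of~\eqref{associate_ineq}). It then remains to identify $(b^{-1})^{-1}$ with $b$.

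That identification is the expected main obstacle. One has $(b^{-1})^{-1}=b$ only up to the values of $b$ at its (countably many) discontinuity points; in fact $(b^{-1})^{-1}$ should be the right-continuous version of $b$. I do not plan to prove exact equality. Instead I will bypass the issue with the duality identity: applying it twice gives $\int T_bf\,g^*=\int T_{(b^{-1})^{-1}}f\,g^*$ for every $g$. Alternatively, $b$ and $(b^{-1})^{-1}$ differ only on a countable set, so $T_bf=T_{(b^{-1})^{-1}}f$ a.e. Either way, $\norm{T_bf}_Y=\norm{T_{(b^{-1})^{-1}}f}_Y$ for every $f$. Combining this with the previous step gives $\norm{T_b}_{X\to Y}\le\norm{T_{b^{-1}}}_{Y'\to X'}$, hence both the equivalence and the equality of the norms.
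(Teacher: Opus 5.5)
Your proof is correct. Its core is the paper's: the Tonelli identity $\int_0^\infty T_bf\,g^*=\int_0^\infty f^*\,T_{b^{-1}}g$, combined with \eqref{abs_out} and $(T_bf)^*=T_bf$ a.e. You differ in how you obtain the reverse inequality.

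The paper writes a single chain,
$\norm{T_b}_{X\to Y}=\sup_{\norm{f}_X\le1}\norm{T_bf}_{Y''}=\sup_{\norm{f}_X\le1}\sup_{\norm{g}_{Y'}\le1}\int_0^\infty T_bf\,g^*$,
inserts the identity and interchanges the two suprema to arrive at $\norm{T_{b^{-1}}}_{Y'\to X'}$. Both inequalities come out at once, only $Y''=Y$ is used, and $(b^{-1})^{-1}$ never appears.

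You instead prove $\norm{T_{b^{-1}}}_{Y'\to X'}\le\norm{T_b}_{X\to Y}$ by H\"older, then rerun that argument for $b^{-1}$ and the pair $(Y',X')$. This costs you $X''=X$ in addition to $Y''=Y$, and forces you to compare $T_{(b^{-1})^{-1}}$ with $T_b$. Both of your bypasses for that comparison are valid; the double application of the identity plus $Y''=Y$ is the cleaner one. So nothing is missing. Note, though, that the identity read backwards gives the reverse inequality in one line:
$\norm{T_bf}_Y=\sup_{\norm{g}_{Y'}\le1}\int_0^\infty f^*\,T_{b^{-1}}g\le\norm{f}_X\norm{T_{b^{-1}}}_{Y'\to X'}$.
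What your split does buy is that it isolates the direction $T_b\colon X\to Y\Rightarrow T_{b^{-1}}\colon Y'\to X'$ as not needing the Lorentz--Luxemburg theorem at all.

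A few small corrections, none of which affect validity:
\begin{itemize}
\item Your parenthetical claim that $(b^{-1})^{-1}$ is the right-continuous version of $b$ is wrong. For $u<\sup\{s:b(s)>0\}$ one gets $(b^{-1})^{-1}(u)=b(u-)$, the left limit. For example, $b=2\chi_{(0,1)}+\chi_{[1,2)}$ is right-continuous, yet $(b^{-1})^{-1}(1)=2\neq b(1)$. This is harmless, since you only use that the two functions differ on a countable set.
\item $b^{-1}$ does not literally fit Definition~\ref{def01:1}: if $\inf b>0$, then $b^{-1}=\infty$ on $(0,\inf b)$. This caveat is already built into the statement and does not affect the Tonelli computation.
\item The regions in Remark~\ref{rem01:1} agree only up to a null set. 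Besides the boundary graph you mention, the $t$-sections can differ by a whole interval. This happens only at the countably many levels $s$ at which $b$ is constant on an interval, so the discrepancy is still null.
\end{itemize}
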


\begin{proof}
    \begingroup
        \addtolength{\jot}{0.3em}
        The Fubini theorem combined with Remark~\ref{rem01:1} yields
        \begin{equation*}
            \begin{aligned}
                \int_{0}^{\infty} h^* \, T_{b}g
                & =  \int_{0}^{\infty}\int_{0}^{b(t)} h^*(t)\,g^*(s)\,ds\, dt \\
                & = \int_{0}^{\infty}\int_{0}^{b^{-1}(s)} h^*(t) \,g^*(s)\,dt\, ds \\
                & =  \int_{0}^{\infty} g^* \, T_{b^{-1}}h,
            \end{aligned}
        \end{equation*}
        for every $g$, $h\in \mathcal{M}(0,\infty)$. We note that $T_{b}f=(T_{b}f)^*$ a.e.\ because $T_{b}f$ is nonincreasing. 
        This fact, interchanging suprema,~\eqref{abs_out}, and the equality in~\eqref{associate_ineq} now show that
        \begin{equation*}
            \begin{aligned}
                \norm{T_b}_{X\rightarrow Y} &= \sup_{\norm{g}_X \leq 1} \norm{T_{b}g}_Y 
                = \sup_{\norm{g}_X \leq 1} \norm{T_{b}g}_{Y''} \\
                & = \sup_{\norm{g}_X \leq 1} \sup_{\norm{h}_{Y'} \leq 1} \int_{0}^{\infty} h^* \, T_{b}g \\
                & = \sup_{\norm{h}_{Y'} \leq 1} \sup_{\norm{g}_X \leq 1} \int_{0}^{\infty} g^* \, T_{b^{-1}}h \\
                & = \sup_{\norm{h}_{Y'} \leq 1} \norm{T_{b^{-1}}h}_{X'} = \norm{T_{b^{-1}}}_{Y'\rightarrow X'}.
            \end{aligned}
        \end{equation*}
    \endgroup
\end{proof}

Finally, we state the first part of this section's principal result. For a given rearrangement-invariant space, we find its optimal range partner 
with respect to a given operator $T_b$.

\begin{thm}\label{thm01:2}
    Let $b\colon (0,\infty)\rightarrow[0,\infty)$ be a nonincreasing function that is not identically zero. Let $X$ be a 
    rearrangement-invariant space such that
    \begin{equation}\label{eq:min2}
        \min\{1,b^{-1}\}\in X'.
    \end{equation}
    Then $T_{b}\colon X\rightarrow (\Upsilon_{b^{-1},X'})'$. Furthermore, the space $(\Upsilon_{b^{-1},X'})'$ is the optimal range partner 
    of $X$ with respect to $T_{b}$ in the class of rearrangement-invariant spaces, and no range partner in this class exists if~\eqref{eq:min2} is false.
\end{thm}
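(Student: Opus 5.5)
The plan is to obtain all three assertions from the duality in Lemma~\ref{prop01:1}, applied with $b^{-1}$ in place of $b$. First we check the setting. Since $X$ is a rearrangement-invariant space, $X'$ is a rearrangement-invariant space as well. Assumption~\eqref{eq:min2} is exactly the hypothesis of Theorem~\ref{thm01:1} for $Y=X'$ and the function $b^{-1}$. The generalized inverse $b^{-1}$ is nonincreasing, and it is not identically zero because $b\not\equiv 0$. Hence, by Remark~\ref{rem01:2}, $Z:=\Upsilon_{b^{-1},X'}$ is a rearrangement-invariant space with norm $\norm{f}_Z=\norm{T_{b^{-1}}f}_{X'}$. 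Consequently $Z'$ is a rearrangement-invariant space and $Z''=Z$.

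\emph{Boundedness.} By the definition of $Z$, $T_{b^{-1}}\colon Z\to X'$ with norm at most $1$. We write this as $T_{b^{-1}}\colon (Z')'\to X'$, using $Z''=Z$. Lemma~\ref{prop01:1} is then applied to the function $b^{-1}$ and the spaces $Z'$, $X$. This requires the identity $(b^{-1})^{-1}=b$, which holds a.e. (and in fact everywhere if $b$ is right-continuous). Changing $b$ on a null set alters neither $T_b$ nor $T_{b^{-1}}$ in a way that affects norms; checking this is the one technical point. The lemma gives $T_b\colon X''=X\to Z'$.

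\emph{Optimality.} Let $Y$ be a rearrangement-invariant space with $T_b\colon X\to Y$. By Lemma~\ref{prop01:1}, $T_{b^{-1}}\colon Y'\to X'$, i.e.\ there is $C$ with $\norm{T_{b^{-1}}h}_{X'}\le C\norm{h}_{Y'}$. This says precisely that $\norm{h}_Z\le C\norm{h}_{Y'}$, so $Y'\hookrightarrow Z$. Passing to associate spaces reverses embeddings, which gives $Z'\hookrightarrow Y''=Y$. Hence $Z'$ is the optimal range partner.

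\emph{Nonexistence when~\eqref{eq:min2} fails.} Suppose some rearrangement-invariant space $Y$ satisfies $T_b\colon X\to Y$. As above, $T_{b^{-1}}\colon Y'\to X'$. Since $Y'$ is a rearrangement-invariant space, $\chi_{(0,1)}\in Y'$, so $T_{b^{-1}}\chi_{(0,1)}=\min\{1,b^{-1}\}\in X'$. This contradicts the failure of~\eqref{eq:min2}. Equivalently, Theorem~\ref{thm01:1} shows that $\norm{T_{b^{-1}}\cdot}_{X'}$ would not satisfy (P4).

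The main obstacle I expect is the bookkeeping around the generalized inverse. One has to confirm $(b^{-1})^{-1}=b$ up to a null set, including the boundary values at $b(0)$ and the conventions for $b^{-1}$ on $[b(0),\infty)$. One also has to make sure that the Fubini identity from Remark~\ref{rem01:1} is symmetric, so that Lemma~\ref{prop01:1} really applies with the roles of $b$ and $b^{-1}$ swapped. If this inverse identity is awkward to establish directly, I would instead rerun the proof of Lemma~\ref{prop01:1}. That computation already shows $\int_0^\infty h^*\,T_bg=\int_0^\infty g^*\,T_{b^{-1}}h$ for all $g$, $h$, and this identity yields the needed duality for the pair $(Z',X)$ without ever inverting $b^{-1}$.
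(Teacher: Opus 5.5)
Your proof is correct and follows the paper. The optimality and nonexistence parts are argued exactly as there, and for boundedness the paper carries out your fallback: it computes $\norm{T_bf}_{(\Upsilon_{b^{-1},X'})'}=\sup_{\norm{g}_Z\leq1}\int_0^\infty T_{b^{-1}}g\,f^*$ via Fubini and then applies H\"older's inequality for $X$, $X'$. The inversion $(b^{-1})^{-1}=b$ that you flag as the main obstacle is never needed. The ``if'' direction of Lemma~\ref{prop01:1}, applied to the original $b$ with $Y:=Z'$, reads: $T_b\colon X\to Z'$ if $T_{b^{-1}}\colon Z''=Z\to X'$. The latter holds with norm at most $1$ by the definition of $Z$.
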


\begin{proof}
    Choose $f\in X$. 
    We note that $T_{b}f=(T_{b}f)^*$ a.e.\ because $T_{b}f$ is nonincreasing. 
    By this fact,~\eqref{abs_out}, and the Fubini theorem combined with Remark~\ref{rem01:1}, we have
    \begin{equation*}
        \begin{aligned}
            \lVert T_{b}f \rVert_{(\Upsilon_{b^{-1},X'})'} &= \sup_{\lVert g \rVert_{\Upsilon_{b^{-1},X'}} \leq 1} \int_{0}^{\infty} g^* \,T_{b}f \\
            &= \sup_{\lVert g \rVert_{\Upsilon_{b^{-1},X'}} \leq 1} \int_{0}^{\infty}\int_{0}^{b(t)} g^*(t)\,f^*(s)\,ds\, dt \\
            &= \sup_{\lVert g \rVert_{\Upsilon_{b^{-1},X'}} \leq 1} \int_{0}^{\infty}\int_{0}^{b^{-1}(s)} g^*(t) \,f^*(s)\,dt\, ds \\
            &= \sup_{\lVert g \rVert_{\Upsilon_{b^{-1},X'}} \leq 1} \int_{0}^{\infty} T_{b^{-1}}g\,f^*.
        \end{aligned}
    \end{equation*}
    Finally, using the H\"older inequality, we get
    \begin{equation*}
        \begin{aligned}
            \lVert T_{b}f \rVert_{(\Upsilon_{b^{-1},X'})'} &= \sup_{\lVert g \rVert_{\Upsilon_{b^{-1},X'}} \leq 1} \int_{0}^{\infty} T_{b^{-1}}g\,f^* \\
            &\leq \sup_{\lVert g \rVert_{\Upsilon_{b^{-1},X'}} \leq 1} \lVert T_{b^{-1}}g \rVert_{X'} \, \lVert f^* \rVert_{X} \\
            &= \sup_{\lVert g \rVert_{\Upsilon_{b^{-1},X'}} \leq 1} \lVert g \rVert_{\Upsilon_{b^{-1},X'}} \, \lVert f \rVert_{X} = \lVert f \rVert_{X},
        \end{aligned}
    \end{equation*}
    which proves that $T_{b}\colon X\rightarrow (\Upsilon_{b^{-1},X'})'$. 
    For optimality, suppose that $Z$ is a rearrangement-invariant space such that $T_{b}\colon X\rightarrow Z$.
    By Lemma~\ref{prop01:1}, we have that $T_{b^{-1}}\colon Z'\rightarrow X'$. This means that for some~$C \in (0,\infty)$
    \begin{equation*}
        \lVert f \rVert_{\Upsilon_{b^{-1},X'}} = \lVert T_{b^{-1}}f \rVert_{X'} \leq  C \,
         \lVert f \rVert_{Z'},
    \end{equation*}
    for all $f \in Z'$. Therefore, $Z' \hookrightarrow \Upsilon_{b^{-1},X'}$, and $(\Upsilon_{b^{-1},X'})'\hookrightarrow Z$. Thus, $(\Upsilon_{b^{-1},X'})'$ is optimal.
    Next, consider that $\min\{1,b^{-1}\}\notin X'$, and let $Y$ be a rearrangement-invariant space such that $T_{b}\colon X\rightarrow Y$. Again, by Lemma~\ref{prop01:1},
    we have for some~$C \in (0,\infty)$
    \begin{equation*}
        \lVert T_{b^{-1}}f \rVert_{X'} \leq C \, \lVert f \rVert_{Y'},
    \end{equation*}
    for all $f\in Y'$. Hence,
    \begin{equation*}
        \begin{aligned}
            \lVert \chi_{(0,1)} \rVert_{Y'} & \geq C^{-1} \, \lVert T_{b^{-1}}\chi_{(0,1)} \rVert_{X'} 
            = C^{-1} \, \lVert \min\{1,b^{-1}\} \rVert_{X'} = \infty.
        \end{aligned}
    \end{equation*}
    Therefore, $Y'$ is not a rearrangement-invariant space, which contradicts our assumption on $Y$.
\end{proof}

In the second part of the principal result, we find the optimal domain partner for a given quasinormed rearrangement-invariant space with respect to 
a given operator $T_b$. It is of note that, unlike the last result, this holds for general quasinormed rearrangement-spaces because the proof does not utilize any 
duality properties.

\begin{thm}\label{thm01:4}
    Let $b\colon (0,\infty)\rightarrow[0,\infty)$ be a nonincreasing function that is not identically zero.
    Let $Y$ be a quasinormed rearrangement-invariant space such that
    \begin{equation}\label{eq:min3}
        \min\{1,b\}\in Y.
    \end{equation}
    Then $T_{b}\colon \Upsilon_{b,Y}\rightarrow Y$. 
    Furthermore, the space $\Upsilon_{b,Y}$ is the optimal domain partner of $Y$ with respect to $T_{b}$, and if~\eqref{eq:min3} is false, there is no domain partner in the class of quasinormed rearrangement-invariant spaces.
\end{thm}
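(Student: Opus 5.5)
Under \eqref{eq:min3}, Theorem~\ref{thm01:1} and Remark~\ref{rem01:2} make $\Upsilon_{b,Y}$ a quasinormed rearrangement-invariant space. Boundedness is then immediate from the definition: for $f\in\Upsilon_{b,Y}$ we have $\norm{T_bf}_Y=\norm{T_b\abs{f}}_Y=\rho(\abs{f})=\norm{f}_{\Upsilon_{b,Y}}$, since $(\abs{f})^*=f^*$. Hence $T_b\colon\Upsilon_{b,Y}\to Y$ with norm at most $1$; in fact $T_b$ is an isometry in this sense.

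For optimality, let $Z$ be any quasinormed rearrangement-invariant space with $T_b\colon Z\to Y$. Then there is $C\in(0,\infty)$ with $\norm{T_bf}_Y\le C\norm{f}_Z$ for all $f\in Z$. The left-hand side equals $\norm{f}_{\Upsilon_{b,Y}}$, so $\norm{f}_{\Upsilon_{b,Y}}\le C\norm{f}_Z$. This gives $Z\hookrightarrow\Upsilon_{b,Y}$, so $\Upsilon_{b,Y}$ is the largest domain. No duality is used, which is why the argument works for quasinorms.

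The only real step is the nonexistence claim when \eqref{eq:min3} fails. Suppose $\min\{1,b\}\notin Y$ and that some quasinormed rearrangement-invariant space $Z$ satisfies $T_b\colon Z\to Y$. By (P4) for $Z$ we have $\chi_{(0,1)}\in Z$. Then $T_b\chi_{(0,1)}=\min\{1,b\}$ must lie in $Y$, since $\norm{\min\{1,b\}}_Y\le C\norm{\chi_{(0,1)}}_Z<\infty$. This contradicts the assumption.

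The one subtlety is what ``$T_b\colon Z\to Y$'' means when $\Upsilon_{b,Y}$ is not a space at all. The definition of a domain partner only needs the boundedness inequality, so I would phrase the contradiction directly through that inequality applied to $\chi_{(0,1)}$. This avoids invoking $\Upsilon_{b,Y}$ in the case where it is not defined.
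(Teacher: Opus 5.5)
Your proposal is correct and follows the paper's proof essentially step for step. The identity $\norm{T_bf}_Y=\norm{f}_{\Upsilon_{b,Y}}$ gives both the boundedness and the embedding $Z\hookrightarrow\Upsilon_{b,Y}$. For the nonexistence claim, testing the boundedness inequality on $\chi_{(0,1)}$, where $T_b\chi_{(0,1)}=\min\{1,b\}$, shows that no quasinormed rearrangement-invariant domain partner exists when \eqref{eq:min3} fails.
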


\begin{proof}
    Choose $f\in \Upsilon_{b,Y}$.
    From the definition of $\Upsilon_{b,Y}$, we have
    \begin{equation*}
        \lVert T_{b}f \rVert_Y = \lVert f \rVert_{\Upsilon_{b,Y}},
    \end{equation*}
    which means that $T_{b}\colon \Upsilon_{b,Y}\rightarrow Y$. Next, consider a quasinormed rearrangement-invariant space $Z$ such that $T_{b}\colon Z\rightarrow Y$.
    Then for some $C \in (0,\infty)$ we have
    \begin{equation*}
        \lVert f \rVert_{\Upsilon_{b,Y}} = \lVert T_{b}f \rVert_Y \leq C \lVert f \rVert_Z,
    \end{equation*}
    for every $f\in Z$. Hence, $Z\hookrightarrow \Upsilon_{b,Y}$, which proves the optimality. Finally, suppose that $\min\{1,b\}\notin Y$ and there exists 
    a quasinormed rearrangement-invariant space $X$ such that $T_{b}\colon X\rightarrow Y$. Then for some~$C \in (0,\infty)$
    \begin{equation*}
        \begin{aligned}
            \lVert \chi_{(0,1)} \rVert_{X} & \geq C \, \lVert T_{b}\chi_{(0,1)} \rVert_{Y} = C \, \lVert \min\{1,b\} \rVert_{Y} = \infty.
        \end{aligned}
    \end{equation*}
    Therefore, $X$ is not a quasinormed rearrangement-invariant space, which contradicts our assumption on $X$.
\end{proof}

\section{Applications}\label{examples}
Lastly, we present examples where we attempt to apply the theory we developed in the previous two sections. 
Our primary focus will be on investigating options for combining Theorem~\ref{estimate_equivalence}, Theorem~\ref{thm01:2}, and Theorem~\ref{thm01:4} to obtain optimal endpoint spaces for 
the previously discussed integral kernel operators. We will see that the strategy from~\cite{buriankova2017optimal} has its shortcomings when it comes to generalizing to other kernel operators. \par
First, we present a class of operators for which we are able to find their
optimal endpoints.

\begin{example}\label{ideal_case}
    Let $\psi\colon (0,\infty) \rightarrow (0,\infty)$, $\psi \in L^1 \cap L^\infty$, be nonincreasing and $\tilde{\varphi}\colon (0,\infty) \rightarrow (0,\infty)$ be nondecreasing and right-continuous. 
    Define $k(s,t):=\psi(s \tilde{\varphi}(t))$. Then
    \begin{equation*}
        \esssup_{t\in (0,\infty)} \norm{k(\cdot,t)}_{L^\infty}=\norm{\psi}_{L^\infty},
    \end{equation*}
    \begin{equation*}
        w(t):=\norm{k(\cdot,t)}_{L^1}=\frac{\norm{\psi}_{L^1}}{\tilde{\varphi}(t)}, \ t\in (0,\infty).
    \end{equation*}
    Set
    \begin{equation*}
        b(t):=\frac{w(t)}{\norm{\psi}_{L^\infty}}, \ t\in (0,\infty).
    \end{equation*}
    Corollary~\ref{sufficient_conditions} 
    provides a Calderón-type estimate for $K_k$,
    \begin{equation*}
        (K_kf)^*(t)\leq \norm{\psi}_{L^\infty} \int_0^{b(t)} f^* = \norm{\psi}_{L^\infty} T_{b}f(t),\ \text{a.e.} \ t\in (0,\infty),
    \end{equation*}
    for all $f\in L^1 + L^\infty$. In this case, the estimate can be reversed to some extent,
    \begin{equation}\label{ideal_lower}
        \begin{aligned}
            K_k(f^*)(t)&\geq \int_0^{\frac{\norm{\psi}_{L^1}}{\norm{\psi}_{L^\infty} \tilde{\varphi}(t)}} f^*(s) \psi(s \tilde{\varphi}(t)) \, ds \\ &\geq
        \psi\left(\frac{\norm{\psi}_{L^1}}{\norm{\psi}_{L^\infty}}\right) \int_0^{\frac{\norm{\psi}_{L^1}}{\norm{\psi}_{L^\infty} \tilde{\varphi}(t)}} f^*, \ t\in (0,\infty),
        \end{aligned}
    \end{equation}
    for all $f\in \M(0,\infty)$. With this information, it is simple to show that endpoint optimality for the operator $T_{b}$ is transferred to $K_k$. This allows  
    us to use Theorem~\ref{thm01:2} and Theorem~\ref{thm01:4} to find optimal endpoints for $K_k$ because $b$ is nonincreasing. Explicit examples of such kernels
    include
    \begin{equation*}
        k(s,t)=e^{-st} \ \textrm{(Laplace transform)},
    \end{equation*}
    \begin{equation*}
        k(s,t)=\frac{1}{(st)^p+c}, \ p>1, \ c \in (0,\infty).
    \end{equation*}
\end{example}

In contrast to Example~\ref{ideal_case}, we show an operator for which we may not apply the theory.

\begin{example}
    Let
    \begin{equation*}
        k(s,t):=\frac{1}{s^p + t^p}, \ p>0.    
    \end{equation*}
    Then $\esssup_{t\in (0,\infty)} \norm{k(\cdot,t)}_{L^\infty}=\infty$. Hence, by Theorem~\ref{estimate_equivalence}, there exist no $C \in (0,\infty)$ and $\varphi\colon (0,\infty)\rightarrow(0,\infty)$ such that 
    an inequality of the form~\eqref{estimate} would hold.
\end{example}

We will attempt to apply the theory to a similar operator. However, we first need the following simple duality lemma for symmetric kernels.

\begin{lemma}\label{symmetric_kernel}
Let $k$ be a nonincreasing admissible kernel, and let $X$, $Y$ be rearrangement-invariant spaces. 
Suppose that $k(s,t)=k(t,s)$ for every $t$, $s\in (0,\infty)$.
Then $K_k\colon X \rightarrow Y$ if and only if $K_k\colon Y' \rightarrow X'$. 
\end{lemma}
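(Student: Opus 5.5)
The plan is to mimic the duality argument of Lemma~\ref{prop01:1}, with the Fubini identity for $T_b$ replaced by the symmetry of the kernel. By symmetry of the roles (and because $X''=X$, $Y''=Y$ for rearrangement-invariant spaces), it suffices to prove one implication: if $K_k\colon X\to Y$, then $K_k\colon Y'\to X'$. The converse then follows by applying that implication to the pair $(Y',X')$, since $(X')'=X$ and $(Y')'=Y$.

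The first step is the pairing identity. Because $k\geq 0$ is measurable and $k(s,t)=k(t,s)$, Tonelli's theorem gives, for all $f,g\in\M(0,\infty)$ with $|f|,|g|$ in place of $f,g$,
\begin{equation*}
    \int_0^\infty |g(t)|\,K_k|f|(t)\,dt=\int_0^\infty\int_0^\infty |g(t)||f(s)|k(s,t)\,ds\,dt=\int_0^\infty |f(s)|\,K_k|g|(s)\,ds.
\end{equation*}
Moreover, $|K_kf|\leq K_k|f|$ pointwise wherever $K_kf$ is defined. The second step is to note that $K_k\colon X\to Y$ for a positive operator means in particular that $\norm{K_k|f|}_Y\leq \norm{K_k}\,\norm{f}_X$. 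This holds because $|f|\in X$ with the same norm and $K_k|f|$ is defined as the integral of a nonnegative function. Finiteness of $K_k|f|$ follows from $K_k|f|\in Y$ being finite a.e.

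The main step is the norm estimate. For $g\in Y'$, I use the definition of the associate norm of $X'$ together with $X''=X$ (the equality case of \eqref{associate_ineq}):
\begin{equation*}
    \norm{K_kg}_{X'}\leq\norm{K_k|g|}_{X'}=\sup_{\norm{f}_X\leq1}\int_0^\infty |f|\,K_k|g|=\sup_{\norm{f}_X\leq1}\int_0^\infty |g|\,K_k|f|\leq \norm{g}_{Y'}\sup_{\norm{f}_X\leq1}\norm{K_k|f|}_{Y}.
\end{equation*}
The last inequality is the H\"older inequality for the pair $Y,Y'$. This yields $\norm{K_k}_{Y'\to X'}\leq\norm{K_k}_{X\to Y}$. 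Applying the same bound to the pair $(Y',X')$ gives equality of the norms, although only boundedness is required.

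The main obstacle I expect is a technical one: making sure $K_kg$ is well defined for $g\in Y'$, i.e.\ that $\int|g(s)|k(s,t)\,ds<\infty$ for a.e.\ $t$, before writing $|K_kg|\leq K_k|g|$. The plan is to derive this from the computation itself. The displayed chain shows $\norm{K_k|g|}_{X'}<\infty$, and since $X'$ is a rearrangement-invariant space, its elements are finite a.e. Hence $K_k|g|<\infty$ a.e., so $K_kg$ makes sense a.e. The monotonicity of $k$ in the second variable is not actually needed for this argument. It is presumably only part of the standing hypotheses, so I would not use it.
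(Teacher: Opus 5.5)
Your proof is correct, but it takes a different route from the paper's.

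\textbf{The paper's route.} The paper works entirely with rearrangements. It writes $\norm{K_kg}_{X'}$ via \eqref{abs_out} as a supremum of $\int_0^\infty f^*(K_kg)^*$. It then replaces $(K_kg)^*$ by $\int_0^\infty g^*(s)k(s,\cdot)\,ds$, citing ``Hardy--Littlewood''. Next it uses Fubini and the symmetry of $k$. Finally it enlarges the supremum from the pair $(f^*,g^*)$ to arbitrary $(f,g)$ and invokes $Y''=Y$.

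The rearrangement step is where the monotonicity hypothesis is tacitly used. First, $k(\cdot,t)=k(t,\cdot)$ must be nonincreasing, so that Hardy--Littlewood yields $\abs{K_kg(t)}\le K_k(g^*)(t)$. Second, $K_k(g^*)$ must be nonincreasing in $t$, so that this bound passes to $(K_kg)^*$.

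\textbf{Your route.} You bypass rearrangements altogether. The domination $\abs{K_kg}\le K_k\abs{g}$, the definition of $\norm{\cdot}_{X'}$, Tonelli combined with symmetry, and the H\"older inequality for $(Y,Y')$ already suffice. This buys you three things:
\begin{itemize}
\item the proof works for every nonnegative measurable symmetric kernel, so the monotonicity assumption is indeed superfluous, as you observe;
\item you get the norm equality $\norm{K_k}_{X\to Y}=\norm{K_k}_{Y'\to X'}$;
\item you justify explicitly that $K_kg$ is defined a.e.\ for $g\in Y'$, which the paper leaves implicit.
\end{itemize}
The paper's version, in turn, is modeled directly on the proof of Lemma~\ref{prop01:1}.

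One small point: the identity $\norm{K_k\abs{g}}_{X'}=\sup_{\norm{f}_X\le1}\int_0^\infty\abs{f}\,K_k\abs{g}$ is just the definition of the associate norm. So $X''=X$ is needed only in your reduction of the converse implication, not at that step.
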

\begin{proof}
Suppose that $K_k\colon X \rightarrow Y$. By \eqref{abs_out}, the Hardy-Littlewood inequality, the Fubini theorem, interchanging suprema, the fact that supremum over nonincreasing nonnegative functions is less than 
supremum over all functions, and finally the fact that $Y=Y''$, we have
\begin{equation*}
\begin{aligned}
\norm{K_k}_{Y'\rightarrow X'}=\sup_{\norm{g}_{Y'}\leq 1} \norm{K_k g}_{X'}&=\sup_{\norm{g}_{Y'}\leq 1} \sup_{\norm{f}_{X}\leq 1} \int_0^\infty f^* (t) \left(\int_0^\infty g(s) k(s,\cdot)  \, ds\right)^*(t) \, dt \\
&\leq \sup_{\norm{g}_{Y'}\leq 1} \sup_{\norm{f}_{X}\leq 1} \int_0^\infty f^* (t) \int_0^\infty g^*(s) k(s,t)  \, ds \, dt \\
&=\sup_{\norm{f}_{X}\leq 1} \sup_{\norm{g}_{Y'}\leq 1} \abs{\int_0^\infty g^* (s) \int_0^\infty  f^*(t) k(t,s) \, dt \, ds} \\
&\leq \sup_{\norm{f}_{X}\leq 1} \sup_{\norm{g}_{Y'}\leq 1} \int_0^\infty \abs{g(s) \int_0^\infty  f(t) k(t,s) \, dt} \, ds \\
&= \sup_{\norm{f}_{X}\leq 1} \norm{K_k f}_Y = \norm{K_k}_{X\rightarrow Y}
\end{aligned}
\end{equation*}
The proof of the other implication is analogous.
\end{proof}

We now try to `fix' the operator from the previous example by perturbing the denominator. However, in the end, we show that the proposed optimal endpoints are, 
in fact, not generally optimal.

\begin{example}
    Let
    \begin{equation*}
        k(s,t):=\frac{1}{s^p + t^p+c}, \ p\in (1,\infty), \ c \in (0,\infty).    
    \end{equation*}
    One can calculate that 
    \begin{equation*}
        w(t):=\norm{k(\cdot,t)}_{L^1}=\frac{\pi}{p \sin\left({\frac{\pi}{p}}\right)} (c+t^p)^{\frac{1}{p}-1}, \ t\in (0,\infty),
    \end{equation*}
    and
    \begin{equation*}
        \esssup_{t\in (0,\infty)} \norm{k(\cdot,t)}_{L^\infty}=\frac{1}{c}.
    \end{equation*}
    Set
    \begin{equation*}
         b(t):=c w(t), \ t\in (0,\infty).
    \end{equation*}
    Corollary~\ref{sufficient_conditions} provides the estimate
    \begin{equation*}
        (K_k f)^*(t)\leq \frac{1}{c} \int_0^{b(t)} f^*, \ t\in (0,\infty),
    \end{equation*}
    for all $f\in L^1 + L^\infty$. To disprove endpoint optimality, consider the special case 
    \begin{equation*}
        k(s,t):=\frac{1}{s^2 + t^2+1}.    
    \end{equation*}
    Then 
    \begin{equation*}
        b(t)=\norm{k(\cdot,t)}_{L^1}=\frac{\pi}{2\sqrt{1+t^2}}, \ t\in (0,\infty).
    \end{equation*}
    We calculate that $b^{-1}(t)=\sqrt{\frac{\pi^2}{4 t^2}-1}, \ t\in\left(0,\frac{\pi}{2}\right)$ and $b^{-1}(t)=0,\ t\in [\frac{\pi}{2},\infty)$.
    Aiming for a contradiction, assume that $T_b$ shares its optimal endpoints with $K_k$. We have that $\min\{1,b\}\in L^2$ and 
    $\min\{1,b^{-1}\}\in L^2$. Because of our assumption and Theorem~\ref{thm01:2}, $\Upsilon'_{b^{-1},L^2}$ is the optimal range partner 
    for $L^2$. Using Lemma~\ref{symmetric_kernel}, we observe that $\Upsilon_{b^{-1},L^2}$ is the optimal domain partner for $L^2$ among rearrangement-invariant spaces. 
    By Theorem~\ref{thm01:4}, $\Upsilon_{b,L^2}$ is also the optimal domain partner for $L^2$. Thus, $\Upsilon_{b^{-1},L^2}\approx 
    \Upsilon_{b,L^2}$, because both are rearrangement-invariant spaces. However, one can easily verify that $\norm{f}_{\Upsilon_{b^{-1},L^2}}<\infty$ and 
    $\norm{f}_{\Upsilon_{b,L^2}}=\infty$ for $f(t):=t^{-3/4},\ t\in (0,\infty)$, so $\Upsilon_{b^{-1},L^2}\not\approx\Upsilon_{b,L^2}$. This means that 
    $K_k$ and $T_b$ do not share their optimal endpoints. 
\end{example}

We have seen that the results of using our methods to find optimal endpoints for operators $K_k$ vary. Furthermore, we have demonstrated that the case of the Laplace transform in~\cite{buriankova2017optimal} and the operators in Example~\ref{ideal_case} is fairly unique because the Calder\'on-type estimate can be reversed. To conclude, we state a simple sufficient condition for the reversal of our Calder\'on-type estimate. This then guarantees that $K_k$ and $T_b$ share their optimal endpoints.

\begin{prop}
    Let $k$ be an admissible kernel nonincreasing in the first variable. Let $k$ satisfy the conditions of Corollary~\ref{sufficient_conditions} and denote 
    $b(t):=w^*(t)/C$, $t\in (0,\infty)$. Define
    \begin{equation}\label{sufficient_constant}
        c:=\essinf_{t\in (0,\infty)} \frac{1}{b(t)} \int_0^{b(t)} k(s,t) \, ds.
    \end{equation}
    If $c \in (0,\infty)$, every $f\in \M(0,\infty)$ satisfies
    \begin{equation*}
        \frac{c}{2} \int_0^{b(t)} f^* \leq \int_0^\infty  f^*(s) k(s,t) \, ds, \ a.e.\ t\in (0,\infty).
    \end{equation*}
    \begin{proof}
        By~\eqref{sufficient_constant}, we have
        \begin{equation*}
            \begin{aligned}
                c \int_0^{b(t)} f^* &\leq \frac{1}{b(t)} \int_0^{b(t)} k(s,t) \, ds \int_0^{b(t)} f^* = \frac{1}{b(t)} \int_0^{b(t)} \int_0^{b(t)} f^*(u) k(s,t) \, ds \, du \\
                &\leq\frac{1}{b(t)} \left(\int_0^{b(t)} \int_0^{b(t)}\left( \chi_{\{s>u\}}  f^*(u) k(u,t) +\chi_{\{u>s\}} f^*(s) k(s,t) \right)\, ds \, du\right) \\
                &= \frac{2}{b(t)} \left( \int_0^{b(t)} \int_0^{b(t)}  f^*(s) k(s,t) \, ds \, du \right) \leq 2 \int_0^\infty  f^*(s) k(s,t) \, ds, \ a.e.\ t\in (0,\infty).
            \end{aligned}
        \end{equation*}
    \end{proof}
\end{prop}

\begin{rem}
    For the Laplace transform,~\eqref{sufficient_constant} evaluates to $c=1-\frac{1}{e}$.
\end{rem}

\bibliography{bibliography}

@book {pick2012function,
    AUTHOR = {Pick, L. and Kufner, A. and John, O. and
              Fu\v{c}\'{i}k, S.},
     TITLE = {Function spaces. {V}ol. 1},
    SERIES = {De Gruyter Series in Nonlinear Analysis and Applications},
    VOLUME = {14},
   EDITION = {extended},
 PUBLISHER = {Walter de Gruyter \& Co., Berlin},
      YEAR = {2013},
     PAGES = {xvi+479},
      ISBN = {978-3-11-025041-1; 978-3-11-025042-8},
   MRCLASS = {46Exx (46-02)},
  MRNUMBER = {3024912},
MRREVIEWER = {Dumitru\ Popa},
}

@BOOK{bs88,
	address = {Boston},
	edition = {Pure and applied mathematics, 129},
	title = {Interpolation of operators},
	isbn = {0-12-088730-4},
	publisher = {Academic Press, Inc., Boston, MA},
	author = {Bennett, C. and Sharpley, R. C.},
	year = {1988}
}

@article {buriankova2017optimal,
    AUTHOR = {Buri\'{a}nkov\'{a}, E. and Edmunds, D. E. and Pick, L.},
     TITLE = {Optimal function spaces for the {L}aplace transform},
   JOURNAL = {Rev. Mat. Complut.},
  FJOURNAL = {Revista Matem\'atica Complutense},
    VOLUME = {30},
      YEAR = {2017},
    NUMBER = {3},
     PAGES = {451--465},
      ISSN = {1139-1138,1988-2807},
   MRCLASS = {46E30 (26D10 44A10 46B70 47B38)},
  MRNUMBER = {3690749},
MRREVIEWER = {Francisco\ L.\ Hern\'andez},
       DOI = {10.1007/s13163-017-0234-5},
       URL = {https://doi.org/10.1007/s13163-017-0234-5},
}

@article{Nekvinda_2024,
AUTHOR = {Nekvinda, A. and Pe\v{s}a, D.},
     TITLE = {On the properties of quasi-{B}anach function spaces},
   JOURNAL = {J. Geom. Anal.},
  FJOURNAL = {Journal of Geometric Analysis},
    VOLUME = {34},
      YEAR = {2024},
    NUMBER = {8},
     PAGES = {Paper No. 231, 29},
      ISSN = {1050-6926,1559-002X},
   MRCLASS = {46A16 (46E30)},
  MRNUMBER = {4746146},
       DOI = {10.1007/s12220-024-01673-y},
       URL = {https://doi.org/…3-y},  
}

@article {Optimal_Sobolev,
    AUTHOR = {Edmunds, D. E. and Kerman, R. and Pick, L.},
     TITLE = {Optimal {S}obolev imbeddings involving rearrangement-invariant
              quasinorms},
   JOURNAL = {J. Funct. Anal.},
  FJOURNAL = {Journal of Functional Analysis},
    VOLUME = {170},
      YEAR = {2000},
    NUMBER = {2},
     PAGES = {307--355},
      ISSN = {0022-1236,1096-0783},
   MRCLASS = {46E35 (46E30)},
  MRNUMBER = {1740655},
MRREVIEWER = {Maria\ A.\ Ragusa},
       DOI = {10.1006/jfan.1999.3508},
       URL = {https://doi.org/10.1006/jfan.1999.3508},
}

@article {classical_lorentz,
    AUTHOR = {\'{C}wikel, M. and Kami\'{n}ska, A. and Maligranda, L.
              and Pick, L.},
     TITLE = {Are generalized {L}orentz ``spaces'' really spaces?},
   JOURNAL = {Proc. Amer. Math. Soc.},
  FJOURNAL = {Proceedings of the American Mathematical Society},
    VOLUME = {132},
      YEAR = {2004},
    NUMBER = {12},
     PAGES = {3615--3625},
      ISSN = {0002-9939,1088-6826},
   MRCLASS = {46E30},
  MRNUMBER = {2084084},
MRREVIEWER = {Oscar\ Blasco},
       DOI = {10.1090/S0002-9939-04-07477-5},
       URL = {https://doi.org/10.1090/S0002-9939-04-07477-5},
}

@article {MR1758860,
    AUTHOR = {Cianchi, A. and Kerman, R. and Opic, B. and Pick, L.},
     TITLE = {A sharp rearrangement inequality for the fractional maximal
              operator},
   JOURNAL = {Studia Math.},
  FJOURNAL = {Studia Mathematica},
    VOLUME = {138},
      YEAR = {2000},
    NUMBER = {3},
     PAGES = {277--284},
      ISSN = {0039-3223,1730-6337},
   MRCLASS = {42B25 (47G10)},
  MRNUMBER = {1758860},
MRREVIEWER = {Wo-Sang\ Young},
}

@article {MR146673,
    AUTHOR = {O'Neil, R.},
     TITLE = {Convolution operators and {$L(p,\,q)$} spaces},
   JOURNAL = {Duke Math. J.},
  FJOURNAL = {Duke Mathematical Journal},
    VOLUME = {30},
      YEAR = {1963},
     PAGES = {129--142},
      ISSN = {0012-7094,1547-7398},
   MRCLASS = {47.25 (46.35)},
  MRNUMBER = {146673},
MRREVIEWER = {I.\ I.\ Hirschman, Jr.},
       URL = {http://projecteuclid.org/euclid.dmj/1077374532},
}

@article {propertiesrearrangement,
    AUTHOR = {Musilov\'{a}, A. and Nekvinda, A. and Pe\v{s}a, D.
              and Tur\v{c}inov\'{a}, H.},
     TITLE = {On the properties of rearrangement-invariant quasi-{B}anach
              function spaces},
   JOURNAL = {Nonlinear Anal.},
  FJOURNAL = {Nonlinear Analysis. Theory, Methods \& Applications. An
              International Multidisciplinary Journal},
    VOLUME = {260},
      YEAR = {2025},
     PAGES = {Paper No. 113854, 30},
      ISSN = {0362-546X,1873-5215},
   MRCLASS = {46E30 (46A16)},
  MRNUMBER = {4919871},
MRREVIEWER = {Francisco\ L.\ Hern\'andez},
       DOI = {10.1016/j.na.2025.113854},
       URL = {https://doi.org/10.1016/j.na.2025.113854},
}

@book {MR1625845,
    AUTHOR = {Evans, L. C.},
     TITLE = {Partial differential equations},
    SERIES = {Graduate Studies in Mathematics},
    VOLUME = {19},
 PUBLISHER = {American Mathematical Society, Providence, RI},
      YEAR = {1998},
     PAGES = {xviii+662},
      ISBN = {0-8218-0772-2},
   MRCLASS = {35-01},
  MRNUMBER = {1625845},
MRREVIEWER = {Luigi\ Rodino},
       DOI = {10.1090/gsm/019},
       URL = {https://doi.org/10.1090/gsm/019},
}

@book {MR482275,
    AUTHOR = {Bergh, J. and L\"{o}fstr\"{o}m, J.},
     TITLE = {Interpolation spaces. {A}n introduction},
    SERIES = {Grundlehren der Mathematischen Wissenschaften},
    VOLUME = {No. 223},
 PUBLISHER = {Springer-Verlag, Berlin-New York},
      YEAR = {1976},
     PAGES = {x+207},
   MRCLASS = {46M35},
  MRNUMBER = {482275},
}

@incollection {Maligranda,
    AUTHOR = {Maligranda, L.},
     TITLE = {The {$K$}-functional for symmetric spaces},
 BOOKTITLE = {Interpolation spaces and allied topics in analysis ({L}und,
              1983)},
    SERIES = {Lecture Notes in Math.},
    VOLUME = {1070},
     PAGES = {169--182},
 PUBLISHER = {Springer, Berlin},
      YEAR = {1984},
      ISBN = {3-540-13363-1},
   MRCLASS = {46E30 (46M35)},
  MRNUMBER = {760482},
MRREVIEWER = {M.\ Z.\ Berkola\u iko},
       DOI = {10.1007/BFb0099100},
       URL = {https://doi.org/10.1007/BFb0099100},
}

@article {Torchinsky,
    AUTHOR = {Torchinsky, A.},
     TITLE = {The {$K$}-functional for rearrangement invariant spaces},
   JOURNAL = {Studia Math.},
  FJOURNAL = {Polska Akademia Nauk. Instytut Matematyczny. Studia
              Mathematica},
    VOLUME = {64},
      YEAR = {1979},
    NUMBER = {2},
     PAGES = {175--190},
      ISSN = {0039-3223,1730-6337},
   MRCLASS = {46E30 (46M35)},
  MRNUMBER = {537119},
MRREVIEWER = {A.\ Kufner},
       DOI = {10.4064/sm-64-2-175-190},
       URL = {https://doi.org/10.4064/sm-64-2-175-190},
}

@article {Curbera,
    AUTHOR = {Curbera, G. P. and Ricker, W. J.},
     TITLE = {Optimal domains for kernel operators via interpolation},
   JOURNAL = {Math. Nachr.},
  FJOURNAL = {Mathematische Nachrichten},
    VOLUME = {244},
      YEAR = {2002},
     PAGES = {47--63},
      ISSN = {0025-584X,1522-2616},
   MRCLASS = {47B34 (46E30 46G10 46M35)},
  MRNUMBER = {1928916},
MRREVIEWER = {Evgenii\ I.\ Pustylnik},
       DOI = {10.1002/1522-2616(200210)244:1<47::AID-MANA47>3.0.CO;2-B},
       URL =
              {https://doi.org/10.1002/1522-2616(200210)244:1<47::AID-MANA47>3.0.CO;2-B},
}

@book {Ricker,
    AUTHOR = {Okada, S. and Ricker, W. J. and S\'anchez P\'erez,
              E. A.},
     TITLE = {Optimal domain and integral extension of operators},
    SERIES = {Operator Theory: Advances and Applications},
    VOLUME = {180},
      NOTE = {},
 PUBLISHER = {Birkh\"auser Verlag, Basel},
      YEAR = {2008},
     PAGES = {xii+400},
      ISBN = {978-3-7643-8647-4},
   MRCLASS = {47B99 (28B05 43A15 43A25 46G10)},
  MRNUMBER = {2418751},
MRREVIEWER = {J.\ Diestel},
}
\end{document}